\documentclass[11pt,longtable,letterpaper,oneside,reqno]{amsart}
\usepackage{amsmath,amsfonts,amsthm,amssymb,amscd,mathtools,stmaryrd,url}
\usepackage{bbm}
\usepackage{epsfig}
\usepackage{graphicx}
\usepackage{latexsym}
\usepackage{longtable}
\usepackage{float}
\usepackage{hyperref}
\usepackage{color}

\DeclareFontFamily{OT1}{rsfs}{}
\DeclareFontShape{OT1}{rsfs}{n}{it}{<-> rsfs10}{}
\DeclareMathAlphabet{\mathscr}{OT1}{rsfs}{n}{it}

\newcommand{\fd}{f\otimes\chi_{d}}
\newcommand{\fod}{f\otimes\chi_{8d}}
\newcommand{\PhidX}{\Phi\left(\frac{8d}{X}\right)}
\newcommand{\PhidXa}{\Phi\left(\frac{8a^2d}{X}\right)}

\renewcommand{\Re}{{\operatorname{Re}}}
\renewcommand{\Im}{{\operatorname{Im}}}
\renewcommand{\a}{\alpha}

\newtheorem{prop}{Proposition}[section]
\newtheorem{thm}[prop]{Theorem}

\newtheorem{lem}[prop]{Lemma}

\newtheorem {defn }{Definition}

\numberwithin{equation}{section}
\usepackage[margin=1in]{geometry}
\usepackage{lipsum}

\allowdisplaybreaks

\begin{document}
\title{The second moment of derivatives of quadratic twists of  modular $L$-functions}

\begin{abstract}
We prove an asymptotic formula for the second moment of the first derivative of quadratic twists of modular $L$-functions with three leading order main terms. It improves the previous result of Kumar \emph{et al.} with the first main term. The proof is based on the large sieve type inequality established by Li, with a key input that we convert the problem into computing an asymptotic formula for the completed twisted modular $L$-functions with large shifts.
\end{abstract}
\author[Y. Jiang]{Yujiao Jiang}
\address{School of Mathematics and Statistics, Shandong University, Weihai, China}
\email{yujiaoj@sdu.edu.cn}

\author[Q. Shen]{Quanli Shen}
\address{SDU-ANU Joint Science College, Shandong University, Weihai, China}
\email{qlshen@outlook.com}

\author[Z. Tang]{Ziyang Tang}
\address{School of Mathematics and Statistics, Shandong University, Weihai, China}
\email{ziyangtang@mail.sdu.edu.cn}

%
\subjclass[2020]{11M06,11F67}
\keywords{\noindent Quadratic twists, shifted moments}
\date{\today}
\maketitle


%
\section{Introduction}
The study of the moments of quadratic twists of modular $L$-functions is partially motivated by the Birch--Swinnerton-Dyer conjecture. Kolyvagin \cite{Kolyvagin} proved that the Mordell--Weil group of a modular elliptic curve $E$ over $\mathbb{Q}$ is finite if $L(1/2,E)\neq 0$ and $L(s,E\otimes \chi)$ has a simple zero at $s=1/2$ for some real Dirichlet character $\chi$. The second condition was independently verified by Bump--Friedberg--Hoffstein \cite{Bump-F-H} and Murty--Murty \cite{ Murty-Murty} by establishing an asymptotic for the first moment of the first derivative of quadratic twists of modular $L$-functions. 

To state our results more precisely, we introduce some notation and recall some standard facts. Let $f$ be a cusp form of weight $\kappa$ for $\operatorname{SL}_2(\mathbb{Z})$ and suppose that $f$ is a Hecke eigenform. The Fourier expansion of $f$ is 
\[
f(z)=\sum_{n=1}^\infty\lambda_{f}(n)n^{(\kappa-1)/2}e(nz),
\] 
with $\lambda_{f}(1)=1$. Deligne's bound gives $|\lambda_f (n)|\leq \tau(n)$ for all $n\geq 1$, where $\tau(n)$ is the divisor function. For $d$ a fundamental discriminant, 
let $\chi_d (\cdot):=\left(\frac{d}{\cdot}\right)$ denote the Kronecker symbol. Then $f\otimes\chi_d$ is a primitive Hecke eigenform of level $|d|^2$, with the $L$-function given by
\[
L(s,f\otimes\chi_d)=\sum_{n=1}^\infty\frac{\lambda_f (n)\chi_d (n)}{n^s}=\prod_{p}\left(1-\frac{\lambda_f (p)\chi_d (p)}{p^s}+\frac{\chi_d (p)^2}{p^{2s}}\right)^{-1}
\] 
for $\operatorname{Re}(s)>1$. The completed $L$-function is 
\[
\Lambda(s,f\otimes\chi_d)=\left(\frac{|d|}{2\pi}\right)^{s}\Gamma\left(s+\frac{\kappa-1}{2}\right)L(s,f\otimes\chi_d),
\] 
and satisfies the functional equation 
\begin{equation*}
\Lambda(s,f\otimes\chi_d)=i^{\kappa}\epsilon(d)\Lambda(1-s, f\otimes\chi_d),
\end{equation*}
where $\epsilon(d)=\chi_d(-1)=1$ if $d$ is positive and $\epsilon(d)=-1$ if $d$ is negative. We denote the root number by $\omega(\fd):=i^{\kappa}\epsilon(d)$. For convenience, it is also common to consider $\chi_{8d}$ with $d$ square-free integers.

The second moment of $L(1/2,f \otimes \chi_{8d})$ was asymptotically established by Soundararajan--Young \cite{Sound-Young} assuming the generalized Riemann hypothesis (GRH) for the case of $\omega(f \otimes \chi_{8d})=1$. 
It was recently proved unconditionally by Li \cite{Li}.  
For $\omega(f \otimes \chi_{8d})=-1$,  it is natural to study $L'(1/2,f \otimes \chi_{8d})$ due to $L(1/2,f \otimes \chi_{8d})= 0$. Based on the work of Soundararajan--Young \cite{Sound-Young}, Petrow \cite{Petrow} gave an asymptotic formula with two leading order main terms for the second moment of  $L'(1/2,f \otimes \chi_{8d})$ under GRH. Recently,  Kumar--Mallesham--Sharma--Singh \cite{Kumar} proved the first main term without GRH by using the large sieve type inequality established by Li \cite{Li}. In this paper, we improve the result of Kumar \emph{et al.} \cite{Kumar} by showing 
\begin{thm}\label{main-thm}
Assume $\kappa \equiv 2 \, (\operatorname{mod} 4)$. Let $\Phi:(0,\infty) \rightarrow \mathbb{R}$ be a smooth,
compactly supported function.  Then 
\begin{align*}
    \sideset{}{^*}\sum_{(d,2)=1}   L'(1/2,\fod)^2   \PhidX=  c_3 X (\log X)^3 + c_2 X (\log X)^2 +c_1 X \log X +  O(X (\log \log X)^5),
\end{align*}
where $\sum^{*}$ denotes the sum over square-free integers. Here 
\begin{align*}
    c_3& = \frac{2\widetilde{\Phi}(1) }{3\pi^2}
     L(1,\operatorname{sym}^2 f )^3
     Z_1(0,0),
\end{align*}
The factor  $Z_1(0,0)$ is defined in \eqref{def-Z-1},  and $\widetilde{\Phi}$ is the Mellin transform of $\Phi$ defined in \eqref{equ:mellin}. The coefficients $c_i$, $i=1,2$ can also be calculated precisely.
\end{thm}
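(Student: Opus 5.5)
Since $\kappa\equiv 2 \pmod 4$ and $8d>0$, the root number is $\omega(\fod)=i^{\kappa}=-1$, so $L(1/2,\fod)=0$ and $L'(1/2,\fod)=\frac{d}{d\alpha}L(1/2+\alpha,\fod)\big|_{\alpha=0}$. We therefore study the shifted second moment of the completed $L$-functions,
\[
M(\alpha,\beta)=\sideset{}{^*}\sum_{(d,2)=1}\Lambda(1/2+\alpha,\fod)\Lambda(1/2+\beta,\fod)\PhidX ,
\]
suitably normalized by $(8d/2\pi)^{-\alpha-\beta}$ and the $\Gamma$-factors, and recover the target as $\partial_\alpha\partial_\beta$ at $\alpha=\beta=0$. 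This derivative can be written by Cauchy's formula as a double contour integral over circles $|\alpha|,|\beta|\asymp r$. The key point is to take the shifts large, $r\asymp 1/\log\log X$ rather than $1/\log X$. With this choice, a uniform asymptotic for $M(\alpha,\beta)$ with error $O(X(\log X)^{\epsilon})$-type bounds, divided by $r^2$, gives the stated error $O(X(\log\log X)^5)$. The same contour then exactly reproduces the polynomial main terms $c_3X(\log X)^3+c_2X(\log X)^2+c_1X\log X$ from the residues of the main-term expression.

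To compute $M(\alpha,\beta)$, we use the approximate functional equation for the product, written with a symmetric integral over $G(s)/s$ and the gamma ratio, giving two pieces (identity and swapped shifts $-\alpha,-\beta$ times root number factors). Each piece is a sum $\sum_{n}\lambda_f(n_1)\lambda_f(n_2)\chi_{8d}(n_1n_2)/(n_1^{1/2+\alpha}n_2^{1/2+\beta})$ weighted by $V$. We then remove the square-free condition by Möbius inversion $\mu^2(d)=\sum_{a^2\mid d}\mu(a)$, splitting at $a\le Y$ and $a>Y$.

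For $a\le Y$ we apply Poisson summation over $d$, following Soundararajan--Young. The zero frequency ($n_1n_2=\square$) gives the diagonal main term, expressed via $L(1+2\alpha,\mathrm{sym}^2f)L(1+\alpha+\beta,\mathrm{sym}^2 f)L(1+2\beta,\mathrm{sym}^2 f)$ times an absolutely convergent Euler product $Z_1(\alpha,\beta)$. The nonzero frequencies contribute a secondary term that must be shown to combine with, or be bounded by, the main terms; unconditionally this requires a contour shift that needs only the standard convexity/analytic continuation. For $a>Y$ we write $d=a^2 b$ and bound the sum by Li's large sieve type inequality for the second moment of $L(1/2+it,f\otimes\chi_{8b})$ over square-free $b$, summed over $a>Y$. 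The sum $\sum_{a>Y} a^{-2}$ saves $Y^{-1}$ times $(\log X)^{O(1)}$, which is acceptable.

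Assembling, the main term is a sum of the diagonal terms and their functional-equation partner, holomorphic in $\alpha,\beta$ near zero. Its $\partial_\alpha\partial_\beta$ at $0$, with $\mathrm{sym}^2$ poles absent and the triple pole structure from $1/(\alpha+\beta)$-type factors, gives $(\log X)^3$ with the leading constant $c_3=\frac{2\widetilde\Phi(1)}{3\pi^2}L(1,\mathrm{sym}^2f)^3Z_1(0,0)$. The factor $2/(3\pi^2)$ comes from the density $\frac{4}{\pi^2}\cdot\frac{1}{?}$ of odd square-free $d$, combined with $\int$ from the derivative combinatorics.

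The main obstacle is keeping every error term uniform in shifts of size $1/\log\log X$. This applies both to the $a>Y$ tail via Li's bound, which must hold for $\Re$ shifts up to $r$ and loses $X^{O(r)}$ factors that must be avoided by taking real parts of shifts on the contours small, and to the off-diagonal Poisson terms. I would first try to ensure that all error bounds hold with $\Re\alpha,\Re\beta\ll 1/\log X$ while $|\Im|\ll r$, by choosing rectangular contours instead of circles.
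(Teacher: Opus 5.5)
There are two genuine gaps. Your framework matches the paper's: a shifted moment of completed $L$-functions, with $\partial_\alpha\partial_\beta$ at $0$ extracted along contours at distance $\asymp 1/\log\log X$ from the origin. (Cauchy circles versus the paper's integrals against $e^{s_1^2+s_2^2}s_1^{-2}s_2^{-2}$ on $\Re s_i\asymp 1/\log\log X$ is immaterial.)

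\textbf{Gap 1: the off-diagonal ($k\neq 0$) Poisson terms.} Via Lemma~\ref{lem:off-euler} they become integrals of $\sum_{k_1}k_1^{-u}L(1+s_1,f\otimes\chi_m)L(1+s_2,f\otimes\chi_m)$ against $X^{1+s_1+s_2+u}$. Convexity gives only $|L(1+s,f\otimes\chi_m)|\ll |k_1|^{-\Re s+\varepsilon}$ for $-1\le\Re s\le 0$. So the $k_1$-sum converges only if $\Re(u+s_1+s_2)>1$, which makes $X^{1+s_1+s_2+u}$ at least $X^2$. This is exactly where Soundararajan--Young and Petrow use GRH, and no ``standard convexity'' contour shift replaces it. Li's large sieve (Lemma~\ref{lem-large-sieve}) does control the $k_1$-average on $\Re s_i=-1/2+1/\log X$. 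But if both Dirichlet polynomials have length $\asymp X$, it still only gives $X(\log X)^{8}Y$ (the paper's bound with $U=1$), which exceeds the main term.

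The missing idea is to unbalance the lengths. Write $I_\beta=I_{\beta,U}+R_\beta$ with $U=(\log X)^{-A}$, and use $I_\alpha I_\beta=I_\alpha I_{\beta,U}+I_{\alpha,U}I_\beta-I_{\alpha,U}I_{\beta,U}+R_\alpha R_\beta$. In the first three products one polynomial has length $\lesssim XU$. The factor $U^{-\beta+s_2+u}$, with real part $\approx 1/2$ on the shifted lines, then makes the whole off-diagonal $\ll X(\log X)^{8}YU^{1/2}$, negligible for $U=Y^{-2}(\log X)^{-20}$. So there is no off-diagonal secondary term to ``combine'' at all. The product $R_\alpha R_\beta$ is never evaluated, only bounded by Cauchy--Schwarz and the large sieve after moving the $s$-lines to $\Re s=\pm 2/L$. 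This works because $(1-U^{s})/s$ is entire, $U^{\pm 2/L}=e^{\mp 2A}$ is bounded, and each dyadic sum over $N_i$ costs only $\log\log X$, giving $O(X(\log\log X)^3)$. This is precisely why the shifts have size $1/\log\log X$: with larger real parts, $U^{\pm 2\Re s}$ would be unbounded.

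\textbf{Gap 2: error bookkeeping on the contour.} The two factors $\oint|d\alpha|/|\alpha|^2\asymp\log\log X$ mean the shifted-moment error must be $O(X(\log\log X)^3)$ uniformly on the contour. An error of type $X(\log X)^{\varepsilon}$ does not give $O(X(\log\log X)^5)$.

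Your proposed cure for the $X^{O(|\Re\alpha|+|\Re\beta|)}$ loss, contours with $|\Re\alpha|,|\Re\beta|\ll 1/\log X$, fails. Such a closed contour passes within $\asymp 1/\log X$ of the origin, where $\oint|d\alpha|/|\alpha|^2\asymp\log X$. The bounded error from $R_\alpha R_\beta$, whose bound does not improve as the shifts shrink, then becomes $X(\log X)^2(\log\log X)^3$. That swamps $c_2X(\log X)^2$ and $c_1X\log X$, so you would recover only the leading term, essentially the result of Kumar et al.

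The paper instead collects exactly the pieces that suffer the $X^{O(|\Re\alpha|+|\Re\beta|)}$ loss into one function $S(\alpha,\beta;Y)$. These are the M\"obius tails $a\ge Y$ and the completion $S_3$ of the $a$-sum in the diagonal. This $S(\alpha,\beta;Y)$ is holomorphic for $|\Re\alpha|,|\Re\beta|\le 1/L$ and is $\ll X^{1+3|\Re\alpha|+3|\Re\beta|}Y^{-1}(\log X)^{50}$. The paper moves the contour \emph{only for this piece} to $\Re s_i\asymp 1/\log X$, where the extra $(\log X)^2$ is absorbed by $Y=(\log X)^{53}$. The main terms and all bounded errors stay at distance $\asymp 1/\log\log X$.

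With these two ingredients your circle version would go through. Then $c_3$ comes out of the residue computation, not a density heuristic. Write the diagonal main term as $X^{1+\alpha+\beta}\zeta(1+\alpha+\beta)T(\alpha,\beta)$. The leading coefficient is $\frac13 T(0,0)$: $\frac12$ from $X^{s}\log X\,\zeta(1+s)$ minus $\frac16$ from $\zeta'(1+s)$. Multiplying by $4\Gamma(\kappa/2)^{-2}$ gives $c_3$.
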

The proof of Theorem \ref{main-thm} may extend to modular newforms for any Hecke congruence groups.  
The proof relies on the large sieve type inequality of Li \cite{Li}. The main input is the following observation. The derivative of an $L$-function morally carries an additional logarithmic factor compared with the  $L$-function itself, which may result in a larger error when computing moments.  To address this issue, we consider the following relation derived by the approximate functional equation (see \eqref{eq:AFE of L'}),
 \begin{align*}
       \sideset{}{^*}\sum_{\substack{(d,2)=1\\d \asymp X}}   L'(1/2,\fod)^2  
       \asymp \int_{(e_1)}\int_{(e_2)}
        \sideset{}{^*}\sum_{\substack{(d,2)=1\\d \asymp X}}  d^{-1}
       \Lambda(1/2+s_1,\fod) \Lambda(1/2+s_2,\fod)
      \frac{ds_1}{s_1^2}\frac{ds_2}{s_2^2}.
    \end{align*}
The above integrals lie on the vertical lines $\Re (s_i) =e_i>0$, $i=1,2$. The integrand contains  the  moment of completed $L$-functions with shifts $s_i$ without any differentiation.
 We then give it an asymptotic formula with an error roughly $\ll X(\log\log X)^3$ (see \eqref{equ:final}). To control the contribution of the factors $1/s_i^2$, we set $e_i \asymp 1/\log \log X$. The asymptotic of the moment in the integrand, together with the above relation, then implies Theorem \ref{main-thm}. 
The shifted moments of $L$-functions have been successfully applied in various problems (see \cite{Chandee,Conrey-Farmer-Keating-Rubinstein-Snaith,  NSW02,Sound-Young,Young}). In our setting, the shifts have relatively large real parts ($\asymp 1/\log \log X$) instead of smaller real parts ($\asymp 1/ \log X$) typically used in the literature.  We remark that very recently, Zhou \cite{Zhou} proved an asymptotic for the moment of derivatives of distinct twisted modular $L$-functions with an error similar to that in Theorem \ref{main-thm} using a different method. It would be very interesting to improve the error in the result of Zhou \cite{Zhou}, or in Theorem \ref{main-thm} here, to $o(X)$, which may require new ideas.

We briefly outline the argument here. We show some standard lemmas in Section \ref{sec2}. Sections \ref{sec3}--\ref{sec5} and the first half of Section \ref{sec6} are devoted to establishing \eqref{equ:final}. The proof of Theorem \ref{main-thm}  is completed in the second half of Section \ref{sec6}. More specifically,  in Section \ref{sec3}, we reduce the length of Dirichlet polynomials for the completed twisted $L$-functions by using the large sieve type inequality. In Section \ref{sec4}, we split the sum into the diagonal terms and the off-diagonal terms by using Poisson, and extract the main term from the diagonal terms. We give the off-diagonal terms an upper bound in Section \ref{sec5}.    

Throughout the paper, we use the notation $L:=\log \log X$ for brevity and always assume $ 0<|\Re(\alpha)|, |\Re(\beta)| \leq 1/L$ and $\alpha \neq \pm\beta$.

\section{Lemmas} \label{sec2}
We first introduce the approximate functional equation, which is referred to \cite[Theorem 5.3]{I-K}.
\begin{lem}\label{lem:AFE}
Assume $|\Re(\alpha)|<1/2$. Let $d>0$ be square-free.
Set  \begin{equation}\label{def:omega-alpha}
    \omega_{\alpha}(\xi):=\frac{1}{2\pi i}\int_{(1)}g_{\alpha}(s)\xi^{-s}\frac{ds}{s},
\end{equation}
where \begin{equation*}
    g_{\alpha}(s):=(2\pi)^{-s}\Gamma\left(\alpha+s+\frac{\kappa}{2}\right).
\end{equation*}
Then 
\begin{equation*}
    \Lambda\left(1/2+\alpha,\fod\right)=I_{\alpha}-I_{-\alpha},
\end{equation*}
where \begin{equation}\label{def:I_alpha}
    I_{\alpha}:=\left(\frac{8d}{2\pi}\right)^{1/2+\alpha}\sum_{n=1}^{\infty}\frac{\lambda_f (n)\chi_{8d} (n)}{n^{1/2+\alpha}}\omega_{\alpha}\left(\frac{n}{8d}\right).
\end{equation}
Also,
 \begin{equation}\label{eq:AFE of L'}
        L'(1/2,\fod)=\Gamma\left(\frac{\kappa}{2}\right)^{-1}\left(\frac{8d}{2\pi}\right)^{-1/2}\frac{2}{2 \pi i}\int_{(1)}\Lambda(1/2+s,\fod) e^{s^2}\frac{ds}{s^2}.
    \end{equation}
\end{lem}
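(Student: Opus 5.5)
The plan is to follow the standard contour-shift proof of the approximate functional equation (as in \cite[Theorem 5.3]{I-K}). The inputs are three facts: $\Lambda(s,\fod)$ is entire, since $f$ is a cusp form and hence so is its twist; it has polynomial growth in vertical strips, while $\Gamma$ decays exponentially by Stirling; and it satisfies the functional equation. For $d>0$ we have $\epsilon(8d)=1$, and since $\kappa\equiv 2 \pmod 4$ we have $i^\kappa=-1$. The functional equation therefore reads
\[
\Lambda(1/2+s,\fod)=-\Lambda(1/2-s,\fod),
\]
so $s\mapsto \Lambda(1/2+s,\fod)$ is odd.

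For the first identity, set
\[
J_\alpha:=\frac{1}{2\pi i}\int_{(1)}\Lambda(1/2+\alpha+s,\fod)\frac{ds}{s}.
\]
\begin{enumerate}
\item Since $\Re(1/2+\alpha+s)>1$ on this line, the Dirichlet series converges absolutely and may be expanded termwise. Writing $\Lambda(1/2+\alpha+s)=(8d/2\pi)^{1/2+\alpha}(8d)^{s}g_\alpha(s)L(1/2+\alpha+s)$ gives
\[
J_\alpha=\left(\frac{8d}{2\pi}\right)^{1/2+\alpha}\sum_{n}\frac{\lambda_f(n)\chi_{8d}(n)}{n^{1/2+\alpha}}\cdot\frac{1}{2\pi i}\int_{(1)} g_\alpha(s)\left(\frac{n}{8d}\right)^{-s}\frac{ds}{s}=I_\alpha .
\]
\item Shift the contour to $\Re s=-1$, which is justified by the Stirling decay. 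The only pole crossed is at $s=0$, since $|\Re\alpha|<1/2$ keeps the $\Gamma$-factor regular on the strip and $\Lambda$ is entire. Its residue is $\Lambda(1/2+\alpha)$.
\item In the integral over $\Re s=-1$, substitute $s=-w$, track the orientation, and apply the functional equation. This turns it into $-\frac{1}{2\pi i}\int_{(1)}\Lambda(1/2-\alpha+w)\frac{dw}{w}=-J_{-\alpha}$.
\end{enumerate}
Hence $J_\alpha=\Lambda(1/2+\alpha)+J_{-\alpha}$, that is, $\Lambda(1/2+\alpha)=I_\alpha-I_{-\alpha}$.

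For \eqref{eq:AFE of L'}, set $F(s):=\Lambda(1/2+s,\fod)e^{s^2}s^{-2}$. This function is odd, because $\Lambda(1/2+s)$ is odd and $e^{s^2}/s^2$ is even. Let $K:=\frac{1}{2\pi i}\int_{(1)}F(s)\,ds$; the factor $e^{s^2}$ gives rapid decay on vertical lines. Shift to $\Re s=-1$. Oddness gives $\Lambda(1/2)=0$, so $F$ has only a simple pole at $0$, with residue
\[
\frac{d}{ds}\Big[\Lambda(1/2+s)e^{s^2}\Big]_{s=0}=\Lambda'(1/2).
\]
Substituting $s=-w$ in the remaining integral and using oddness of $F$ shows that it equals $-K$. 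Therefore $K=\Lambda'(1/2)-K$, that is, $\Lambda'(1/2)=2K$. Finally, because $L(1/2,\fod)=0$, differentiating the product defining $\Lambda$ gives
\[
\Lambda'(1/2,\fod)=\left(\frac{8d}{2\pi}\right)^{1/2}\Gamma\left(\frac{\kappa}{2}\right)L'(1/2,\fod).
\]
Solving for $L'(1/2,\fod)$ yields \eqref{eq:AFE of L'}.

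There is no real obstacle here. The only points needing care are the sign and orientation bookkeeping in the change of variables $s\mapsto -s$, where the root number $-1$ must be used correctly, and justifying the contour shifts by the growth bounds.
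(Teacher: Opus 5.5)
Your argument is the standard contour-shift proof behind \cite[Theorem 5.3]{I-K}, which is all the paper invokes. Your treatment of \eqref{eq:AFE of L'} is correct: oddness of $F$, residue $\Lambda'(1/2,\fod)$, and $L(1/2,\fod)=0$ to pass from $\Lambda'$ to $L'$.

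There is, however, a sign slip in step 3 of the first identity, in exactly the bookkeeping you flagged. With upward orientation, the substitution $s=-w$ gives
\[
\frac{1}{2\pi i}\int_{(-1)}\Lambda(1/2+\alpha+s,\fod)\frac{ds}{s}=-\frac{1}{2\pi i}\int_{(1)}\Lambda(1/2+\alpha-w,\fod)\frac{dw}{w}.
\]
The reversal of orientation cancels $ds=-dw$, so the only sign here comes from $1/s=-1/w$. The functional equation with root number $-1$ then supplies a second minus sign. So the integral on $\Re s=-1$ equals $+J_{-\alpha}$, not $-J_{-\alpha}$.

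Use the same convention you use for $K$: the line $\Re s=1$ equals the residue plus the line $\Re s=-1$. Then your stated value $-J_{-\alpha}$ would give $\Lambda(1/2+\alpha,\fod)=I_\alpha+I_{-\alpha}$, which is the root-number $+1$ formula. Your final relation $J_\alpha=\Lambda(1/2+\alpha,\fod)+J_{-\alpha}$ is the correct one, but it does not follow from step 3 as written.

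One smaller point concerns the hypotheses. You rightly imported $\kappa\equiv 2\pmod 4$ from Theorem \ref{main-thm}. In addition, using the functional equation with conductor $8d$ requires $8d$ to be a fundamental discriminant, i.e.\ $d$ odd. The statement omits this, but it holds in every application in the paper.
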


\vspace{1em}

We now quote the Poisson summation formula in   \cite[Lemma 2.3]{Li}.
\begin{lem}\label{lem:Poisson}
    Let $F:(0,\infty) \rightarrow \mathbb{R}$ be a smooth,
compactly supported function. Let $n$ be an odd integer. Then \begin{equation}\label{eq_Poisson}
        \sum_{(d,2)=1}\left(\frac{d}{n}\right)F\left(\frac{d}{Z}\right)=\frac{Z}{2n}\left(\frac{2}{n}\right)\sum_{k\in\mathbb{Z}}(-1)^{k}G_{k}(n)\check{F}\left(\frac{kZ}{2n}\right).
    \end{equation}
Here the Gauss-like sum $G_k (n)$ is defined as \[
    G_{k}(n)=\left(\frac{1-i}{2}+\left(\frac{-1}{n}\right)\frac{1+i}{2}\right)\sum_{a\, (\operatorname{mod} n)}\left(\frac{a}{n}\right)e\left(\frac{ak}{n}\right),
    \] and the Fourier-type transform of $F$ is defined to be
    \begin{align}
        \check{F}(y)=&\int_{-\infty}^{\infty}(\cos+\sin)(2\pi xy)F(x)dx\nonumber\\ =& \frac{1}{2\pi i}\int_{(1/2)}\widetilde{F}(1-u)\Gamma(u)(\cos+\operatorname{sgn}(y)\sin)\left(\frac{\pi u}{2}\right)(2\pi |y|)^{-u}du,
        \label{def-Fv}
    \end{align}
    where \begin{align}
    \widetilde{F}(u)=\int_{0}^{\infty}F(x)x^{u}\frac{dx}{x}
    \label{equ:mellin}
    \end{align} 
    is the Mellin transform of $F$.
    
\end{lem}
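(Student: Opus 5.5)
The plan is to detect the condition $(d,2)=1$ by a congruence modulo $2n$, apply classical Poisson summation in each residue class, and evaluate the resulting character sum. The Fourier transform is then symmetrized to reach the $(\cos+\sin)$ form, and its Mellin expression is computed. Write $\hat F(y)=\int_{\mathbb{R}}F(x)e(-xy)\,dx$, and let $C(y),S(y)$ denote the cosine and sine transforms, so that $\hat F(y)=C(y)-iS(y)$ and $\check F=C+S$.

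\emph{Step 1 (splitting into classes).} Since $n$ is odd, the odd residues $b$ modulo $2n$ are exactly $b\equiv n+2a \pmod{2n}$ with $a$ running modulo $n$. For such $b$ we have $\left(\frac{b}{n}\right)=\left(\frac{2}{n}\right)\left(\frac{a}{n}\right)$. Poisson summation in the progression $d\equiv b \pmod{2n}$ gives
\[
\sum_{d\equiv b\,(2n)}F\left(\frac dZ\right)=\frac{Z}{2n}\sum_{k\in\mathbb{Z}}\hat F\left(\frac{kZ}{2n}\right)e\left(\frac{kb}{2n}\right).
\]
This is legitimate because $F$ is smooth and compactly supported, so $\hat F$ decays rapidly. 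Substituting $b=n+2a$ gives $e\left(\frac{kb}{2n}\right)=(-1)^k e\left(\frac{ka}{n}\right)$. Summing over $a$ then yields
\[
\frac{Z}{2n}\left(\frac{2}{n}\right)\sum_k (-1)^k g_k(n)\,\hat F\left(\frac{kZ}{2n}\right),
\qquad
g_k(n)=\sum_{a\,(n)}\left(\frac an\right)e\left(\frac{ak}{n}\right).
\]

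\emph{Step 2 (symmetrizing in $k$).} This is the only delicate bookkeeping step. Set $\varepsilon=\left(\frac{-1}{n}\right)$. Replacing $a$ by $-a$ shows $g_{-k}(n)=\varepsilon\, g_k(n)$, and $(-1)^{-k}=(-1)^k$. Averaging the terms $k$ and $-k$, and using that $C$ is even and $S$ is odd, the $k$-th term becomes
\[
(-1)^k g_k(n)\left[\tfrac{1+\varepsilon}{2}C-i\tfrac{1-\varepsilon}{2}S\right]\left(\tfrac{kZ}{2n}\right).
\]
On the other side, $G_k(n)\check F=g_k(n)\left(\tfrac{1-i}{2}+\varepsilon\tfrac{1+i}{2}\right)(C+S)$.
\begin{itemize}
\item If $\varepsilon=1$, this equals $g_k(C+S)$. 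Compared with our expression it carries an extra $g_kS$ term, which is odd in $k$ and sums to zero.
\item If $\varepsilon=-1$, this equals $-i\,g_k(C+S)$. The extra term $-ig_kC$ is again odd in $k$ and cancels in the sum.
\end{itemize}
So the two full sums over $k$ agree, which proves \eqref{eq_Poisson}.

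\emph{Step 3 (Mellin form).} Insert Mellin inversion $F(x)=\frac{1}{2\pi i}\int_{(c)}\widetilde F(w)x^{-w}\,dw$ into $\check F(y)$, with $1/2<c<1$ so that the following integral converges. Use the classical formulas
\[
\int_0^\infty x^{u-1}\cos(2\pi x|y|)\,dx=\Gamma(u)\cos\left(\tfrac{\pi u}{2}\right)(2\pi|y|)^{-u},
\]
and the analogous one for $\sin$, with $u=1-w$ so that $0<\Re u<1$. The sign $\operatorname{sgn}(y)$ appears because $\sin$ is odd. Then shift $u$ to $\Re u=1/2$, which is justified by the rapid decay of $\widetilde F$ on vertical lines, to obtain \eqref{def-Fv}.
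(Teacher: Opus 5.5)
Your argument is correct and follows the standard proof of this lemma. The paper gives no proof of its own: it quotes Lemma 2.3 of Li \cite{Li}, and that result, going back to Soundararajan, is proved exactly as you do. One applies Poisson summation over the odd classes $n+2a \pmod{2n}$, pairs $k$ with $-k$ using $g_{-k}(n)=\left(\frac{-1}{n}\right)g_k(n)$, and finishes with the classical Mellin evaluation of the cosine and sine transforms.

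The one point to tidy is in Step 3. There $\int_0^\infty x^{u-1}(\cos+\sin)(2\pi x|y|)\,dx$ converges only conditionally, so swapping it with the $w$-integral is not covered by Fubini. Justify the swap by inserting a factor $e^{-\delta x}$ and letting $\delta\to0^+$. Also note that the Mellin form is meant only for $y\neq0$.
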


In the following, we introduce the large sieve type inequalities as  shown in  Lemmas 5.3 and 6.3 of \cite{Li}.
Let $G$ be a smooth real-valued function with compact support on $[3/4,2]$ satisfying 
\[
    \begin{aligned}
       G(x)&=1\; \text{for all}\; x\in [1,3/2], \\
       G(x)+G(x/2)&=1\; \text{for all}\; x\in [1,3].
    \end{aligned}
\]
 Functions of this type appear in standard constructions of partitions of unity. Moreover, one can check that \[
G(x)+G(x/2)+\cdots+G(x/2^{J})=1
\] for $x\in[1,3\cdot 2^{J-1}]$ and is supported on $[3/4,2^{J+1}]$. Throughout this paper, we fix such a function $G$ with the properties above.

\begin{lem} \label{lem-large-sieve}
    Let $M,N\geq 1$, $t \in \mathbb{R}$ and $q\in \mathbb{N}^*$. Let $G$ be a smooth real function compactly supported on $[3/4,2]$, defined as above. Then 
    \begin{align*}
        \sideset{}{^\flat}\sum_{ 0<d\leq M } \left| \sum_{n=1}^\infty \frac{\lambda_f(n)}{n^{1/2+it}}  \left( \frac{d}{n}\right)G\left( \frac{n}{N}\right)\right|^2  &\ll M (1+|t|)^3\log (2+|t|),\\
        \sum_{\substack{(d,2)=1 \\ 0<d\leq M}} \left| \sum_{(n,q)=1} \frac{\lambda_f(n)}{n^{1/2+it}}  \left( \frac{8d}{n}\right)G\left( \frac{n}{N}\right)\right|^2  &\ll \tau(q)^5 M(1+|t|)^3\log (2+|t|),
    \end{align*}
where $ \sideset{}{^\flat}\sum$ is the sum over fundamental discriminants.
\end{lem}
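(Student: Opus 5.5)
The two inequalities in Lemma~\ref{lem-large-sieve} are Li's large sieve estimates (Lemmas 5.3 and 6.3 of \cite{Li}), and the plan is to reproduce the structure of Li's argument rather than to find a new one. The starting point is Heath-Brown's large sieve for quadratic characters,
\[
\sideset{}{^\flat}\sum_{d\le M}\Bigl|\sum_{n\le N}a_n\Bigl(\frac{d}{n}\Bigr)\Bigr|^2\ll_\varepsilon (MN)^{\varepsilon}(M+N)\sum_{n_1n_2=\square}|a_{n_1}a_{n_2}|.
\]
This already gives the bound $\ll M$ when $N\le M^{1-\delta}$ for some fixed $\delta>0$, because the coefficients $\lambda_f(n)n^{-1/2-it}G(n/N)$ have square-correlation sum $\ll \log N$. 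The factor $(MN)^\varepsilon$ is too lossy near $N\asymp M$, so in that range, and for all large $N$, I would argue differently.

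\textbf{Step 1: reduce large $N$ to short sums.} For $N\gg M^{1-\delta}$, write the smoothed sum as a Mellin integral of $L(1/2+it+w,f\otimes\chi_d)\widetilde G(w)N^{w}$. Then shift the contour to the left and apply the functional equation. This converts the long sum into a dual sum of effective length about $d^2/N$, weighted by gamma ratios, and those weights are where the polynomial dependence $(1+|t|)^3\log(2+|t|)$ enters. Alternatively, apply Poisson summation in $d$ (Lemma~\ref{lem:Poisson}) to the expanded square. In that case the Gauss sums $G_k(n_1n_2)$ give a diagonal $k=0$ term with $n_1n_2=\square$, which contributes $M\sum_{n_1n_2=\square}\lambda_f(n_1)\lambda_f(n_2)/\sqrt{n_1n_2}$ over dyadic $n_i\asymp N$. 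This is $O(M)$ by Rankin--Selberg, since $\lambda_f(n^2)$ averages like $1/L(1,\operatorname{sym}^2f)$ and the sum over squares is bounded at this scale. The $k\ne0$ terms become sums of length about $N^2/M$ in $k$, and these have the same shape, a character sum in the new variable. This is exactly where Li's recursive device applies.

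\textbf{Step 2 (main obstacle): close the bootstrap.} Define $B(M,N)$ as the best constant, divided by $M$, in the inequality for the given $G$-smoothed coefficients. The duality of Step 1 bounds $B(M,N)$ by $O(1)$ plus a term of the form $B(M',N')$ with $M'\asymp N^2/M$ and $N'$ smaller, or by a multiplicative-dispersion version of this. The danger is that each iteration loses a constant factor, and those losses must not accumulate. Li's key idea is to exploit the Hecke multiplicativity $\lambda_f(n)\lambda_f(p)=\lambda_f(np)+\lambda_f(n/p)$ to factor off small primes. This yields an inequality of the form $B\le C+\tfrac12 B$, which closes at the level of a uniform constant. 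I would follow that route and verify that the $t$-dependence from Stirling's formula in the gamma factors is at most $(1+|t|)^3\log(2+|t|)$.

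\textbf{Step 3: the second inequality.} The restriction $(n,q)=1$ is handled by Möbius inversion over divisors of $q$ in the Euler factor, writing the coprimality condition through the local inverse factors $\prod_{p\mid q}(1-\lambda_f(p)\chi(p)p^{-s}+p^{-2s})$. This expresses the sum as a combination of at most $\tau(q)^{O(1)}$ shifted sums of the first type, and Cauchy--Schwarz then gives the $\tau(q)^5$ factor. Replacing fundamental discriminants by $8d$ with $d$ odd (not necessarily squarefree) is handled by writing $d=a^2d'$, which only changes the coefficients at primes dividing $a$. That change is absorbed into the coprimality-to-$q$ version with a convergent sum over $a$.
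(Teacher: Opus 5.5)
The paper gives no proof of this lemma. Both inequalities are quoted from Li (Lemmas 5.3 and 6.3 of \cite{Li}), so there the citation is the entire proof. Read as a stand-alone proof, your reconstruction has a genuine gap, and it also contains an error at the start.

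Heath-Brown's inequality carries the factor $(MN)^{\varepsilon}$ for every $N$. For $a_n=\lambda_f(n)n^{-1/2-it}G(n/N)$ we have $\sum_{n_1n_2=\square}|a_{n_1}a_{n_2}|\ge\sum_n|a_n|^2\gg 1$, so that inequality gives a bound $\gg M^{1+\varepsilon}$. It never gives $\ll M$, even when $N\le M^{1-\delta}$. In that range you could instead apply Poisson in $d$ first and use Heath-Brown only on the dual $k$-sum, which has length about $N^2/M$; there the $\varepsilon$-loss is absorbed by the factor $N/M\le M^{-\delta}$. A smaller point: $\lambda_f(n^2)$ has mean zero, since $\sum_n\lambda_f(n^2)n^{-s}=L(s,\operatorname{sym}^2 f)/\zeta(2s)$ is regular at $s=1$. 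The diagonal is $O(M)$ because the residue of $\zeta(1+w_1+w_2)$ at $w_1+w_2=0$ in Lemma \ref{lem-dia} is independent of $N$, not because of Rankin--Selberg positivity.

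The serious problem is Step 2. It carries the whole content of the lemma, and it is only asserted. Take $N\asymp M$.
\begin{itemize}
\item Poisson in $d$ produces $k$-sums of length $K\asymp N^2/M\asymp M$.
\item The functional equation sends the resulting $n$-sums, of length $N\ge K$, back to length about $K^2/N\asymp M$.
\item So the recursion returns the same quantity, at the same ratio $N/M$, multiplied by a constant $C'$ coming from $\check{\Phi}$, the Gauss sums and the gamma factors. Nothing evidently forces $C'<1$.
\end{itemize}
An inequality $B\le C+\frac12 B$ is exactly what has to be proved in this self-dual range, and you give no mechanism for the factor $\frac12$.

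The Hecke relation, in the form you invoke it, does not obviously supply that factor. Multiplying $\sum_{p\sim P}\lambda_f(p)\chi_d(p)p^{-1/2-it}$ by the sum at scale $N/P$ reproduces the scale-$N$ sum only with the non-multiplicative weight $\#\{p\sim P: p\mid n\}$, plus an extra term at scale $N/P^2$. Using such a product to bound $\sum_d|\cdot|^2$ would also require control of $\sum_d|A_dB_d|^2$, which is a fourth-moment or pointwise input you do not have.

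The off-diagonal is also not literally ``a character sum of the same shape''. By Lemma \ref{lem:off-euler} it is a sum over $k_1$ of $L(1/2+z_1,f\otimes\chi_m)L(1/2+z_2,f\otimes\chi_m)Y(z_1,z_2,z_3;k_1,q)$, integrated over several contours. Converting it back into dyadic pieces costs powers of $\log$; compare Section \ref{sec5}, where exactly this step loses $(\log X)^{8}$ and is rescued only by the factor $U^{1/2}$. The sieve removing squares from $d$ has a tail that this paper controls only through the large sieve itself, so in a recursive proof that tail must be fed into the induction as well. A recursion that closes with a constant independent of $M$ and $N$ has to control all of these losses, and your sketch does not.

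Step 3 is fine as a reduction. The Euler-factor expansion gives $3^{\omega(q)}$ terms, Cauchy--Schwarz then costs a power of $\tau(q)$ below $5$, and you write $d=a^2d'$ with $8d'$ a fundamental discriminant. But this only reduces the second inequality to the first, which remains unproved.
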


\bigskip

\section{Reduce the length of Dirichlet polynomials} \label{sec3}
By Lemma \ref{lem:AFE},
\begin{align}
  &\sideset{}{^*}\sum_{(d,2)=1}\left(\frac{8d}{2\pi}\right)^{-1}\Lambda(1/2+\alpha,\fod)\Lambda(1/2+\beta,\fod)\PhidX\nonumber\\
  &=\sideset{}{^*}\sum_{(d,2)=1}\left(\frac{8d}{2\pi}\right)^{-1}(I_{\alpha}-I_{-\alpha})(I_{\beta}-I_{-\beta})\PhidX.\label{eq_1}
\end{align}
In  Sections \ref{sec3}--\ref{sec6}, we will prove an asymptotic formula for the above  moment (see\eqref{equ:final}).
 By symmetry, it suffices to consider 
 \begin{equation*}
    \sideset{}{^*}\sum_{(d,2)=1}\left(\frac{8d}{2\pi}\right)^{-1}I_{\alpha}I_{\beta}\PhidX.
\end{equation*} 
Define 
\begin{align}
I_{\alpha,U}:=\left(\frac{8d}{2\pi}\right)^{1/2+\alpha}\sum_{n_1=1}^{\infty}\frac{\lambda_{f}(n_1)\chi_{8d}(n_1)}{n_1^{1/2+\alpha}}\omega_{\alpha}\left(\frac{n_1}{8dU}\right),
\label{eq:I-11}
\end{align}
and 
\begin{align}\label{def:R_alpha}
    R_{\alpha}:=I_{\alpha}-I_{\alpha,U}.
\end{align}
Here $U= (\log X)^{-A}$ for $A>0$ a parameter chosen later.
Clearly,

\begin{align}
      I_{\alpha}I_{\beta} =I_{\alpha}I_{\beta,U} + I_{\alpha,U}I_{\beta}-I_{\alpha,U}I_{\beta,U} + R_{\alpha}R_{\beta} .
      \label{equ:split}
\end{align}

\begin{lem} \label{equ:reduce}
For  $|\Re(\alpha)|,|\Re(\beta)|\leq 1/L $,
\begin{align*}
     \sideset{}{^*}\sum_{(d,2)=1} \left(\frac{8d}{2\pi}\right)^{-1} R_{\alpha}R_{\beta} \PhidX \ll X (\log \log X)^3.
\end{align*}
\end{lem}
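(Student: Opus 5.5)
By Cauchy--Schwarz it suffices to prove, for each $\gamma\in\{\alpha,\beta\}$,
\begin{equation*}
\sideset{}{^*}\sum_{(d,2)=1}\left(\frac{8d}{2\pi}\right)^{-1}|R_{\gamma}|^2\PhidX \ll X(\log\log X)^3 .
\end{equation*}
The plan is to write $R_\gamma$ as a Mellin integral of a Dirichlet series and then apply the large sieve of Lemma \ref{lem-large-sieve}.

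\textbf{Step 1: integral representation of $R_\gamma$.} From \eqref{def:omega-alpha},
\begin{equation*}
\omega_\gamma\Big(\frac{n}{8d}\Big)-\omega_\gamma\Big(\frac{n}{8dU}\Big)=\frac{1}{2\pi i}\int_{(c)}g_\gamma(s)\Big(\frac{8d}{n}\Big)^{s}(1-U^{s})\frac{ds}{s}.
\end{equation*}
The factor $(1-U^s)/s$ is entire, so $c$ may be moved freely. This gives
\begin{equation*}
R_\gamma=\left(\frac{8d}{2\pi}\right)^{1/2+\gamma}\frac{1}{2\pi i}\int_{(c)}g_\gamma(s)(8d)^{s}\frac{1-U^{s}}{s}\sum_{n}\frac{\lambda_f(n)\chi_{8d}(n)}{n^{1/2+\gamma+s}}\,ds .
\end{equation*}
I take $c$ with $\Re(\gamma)+c=0$, i.e.\ the line $\Re(\gamma+s)=0$. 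There the prefactor $d^{1/2+\Re\gamma+c}$ is exactly $d^{1/2}$, which cancels the weight $(8d)^{-1}$ after squaring. On this line the Dirichlet series is $L(1/2+it,\fod)$ with $t=\Im(\gamma+s)$.

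\textbf{Step 2: the large sieve.} The series does not converge absolutely on $\Re=1/2$, so I instead decompose $n$ dyadically with $G(n/N)$ before moving the contour. For each dyadic $N$, I choose the line $\Re(s)=c_N$ according to $N$: move right when $N\gg X$ and left when $N\ll XU$. This produces the factor $(X/N)^{c_N}$, which makes the dyadic blocks outside $[XU, X(\log X)^{\epsilon}]$ decay geometrically. The size of $|1-U^{s}|$ on the line must be accounted for: it is $\ll 1$ when $c_N\ge0$, and $\ll U^{c_N}$ when $c_N<0$, which is exactly what compensates for the $(X/N)^{c_N}$ growth in that range.

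Apply Cauchy--Schwarz to the $s$-integral with measure $|g_\gamma(s)|\,|1-U^s|\,|ds|/|s|$; Stirling makes $g_\gamma$ decay rapidly in $\Im s$. Then Lemma \ref{lem-large-sieve} (second inequality, $q=1$), applied after splitting $\Phi$ into ranges $d\asymp X$ and for each dyadic $N$ and fixed $t$, gives $\ll X(1+|t|)^3\log(2+|t|)$. The polynomial loss in $t$ is absorbed by the decay of $g_\gamma$.

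\textbf{Step 3: counting the loss.} With $U=(\log X)^{-A}$, the number of effective dyadic blocks is $\asymp \log(1/U)\asymp A\log\log X$. Crucially, $(1-U^s)/s$ has size $\min(1/|s|,\log(1/U))$, which accounts for this range. Naively one gets, squared by Cauchy--Schwarz over blocks, a bound like $X(\log\log X)^2$ times the $s$-integral. The $s$-integral near $s\approx 0$ costs $\int |1-U^s|^2/|s|^2$-type factors of size $\log(1/U)\asymp L$. Altogether this gives $X L^3$, as claimed.

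The main obstacle is this bookkeeping. The shift $\Re\gamma$ can be as large as $1/L$, so the factor $(8d/n)^{\gamma}$ is not harmless. For $N$ far from $X$ it grows like $(X/N)^{1/L}$, which over the range $N\ge XU$ is only $e^{A}$. It is harmless only because the truncation region has logarithmic length $\asymp L$ and no more. I would handle it by fixing the contour at $\Re(\gamma+s)=0$ for $XU\le N\le X$ and shifting only in the tails. I would also use the Cauchy--Schwarz over dyadic blocks with weights $(X/N)^{\mp\delta}$ so as not to lose an extra factor of $L$.
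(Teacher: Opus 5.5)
Your argument is correct and uses the same ingredients as the paper: the Mellin representation of $R_\gamma$ with the entire factor $(1-U^{s})/s$, smooth dyadic blocks, block-dependent contour shifts, Cauchy--Schwarz in $d$, and Li's large sieve. Where you differ is the contour bookkeeping.

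\textbf{The paper's route.} It splits only at $N=X$, moving to $\Re(s)=-2/L$ for $N\le X$ and to $\Re(s)=2/L$ for $N>X$. Since $U^{\mp 2/L}=e^{\pm 2A}$, the factor $1-U^{s}$ stays bounded. The sums $\sum_{N\le X}(N/X)^{1/L}$ and $\sum_{N>X}(X/N)^{1/L}$ are each $\ll L$. So the range $N<XU$ never needs separate treatment, and each $s$-integral costs only $\log L$.

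\textbf{Your route.} You stay on $\Re(\gamma+s)=0$ for the $\asymp AL$ blocks in $[XU,X]$. There $(8d/n)^{\gamma+s}$ is unimodular, $|U^{s}|\le e^{A}$, and the Dirichlet polynomials are literally those of Lemma \ref{lem-large-sieve}. In the tails you shift by a fixed $\delta$ to get geometric decay. This isolates the loss as the number of blocks in the transition window. If you bound via Minkowski over blocks together with $\int|g_\gamma(s)(1-U^{s})/s|\,|ds|\ll\log L$, you get the same $XL^{2}(\log L)^{2}$ as the paper.

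\textbf{Two small repairs.}
\begin{enumerate}
\item In the tails the polynomial carries the weight $G(n/N)(n/N)^{\pm\delta}$, which Lemma \ref{lem-large-sieve} does not literally cover. You need the paper's device: insert $V(n/N)$ and Mellin-expand the extra weight back to $\Re(z)=0$.
\item Your opening Cauchy--Schwarz in $d$ requires replacing $\Phi$ by a nonnegative majorant of $|\Phi|$.
\end{enumerate}
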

\begin{proof}


By \eqref{def:R_alpha},
\[
R_{\alpha}=\frac{1}{2\pi i}\int_{(1)} \left({\frac{8d}{2\pi}}\right)^{1/2+\alpha}g_{\alpha}(s_1)(8d)^{s_1}\frac{1-U^{s_1}}{s_1}\sum_{n_1=1}^{\infty}\frac{\lambda_f (n_1)\chi_{8d} (n_1)}{n_1^{1/2+\alpha+s_1}}\, ds_1.
\]
Inserting the smooth dyadic sum gives  
\[
R_{\alpha}=\sideset{}{^\#}\sum_{N_1}\frac{1}{2\pi i}\int_{(1)} \left({\frac{8d}{2\pi}}\right)^{1/2+\alpha} g_{\alpha}(s_1)(8d)^{s_1}\frac{1-U^{s_1}}{s}\sum_{n_1=1}^{\infty}\frac{\lambda_f (n_1)\chi_{8d} (n_1)}{n_1^{1/2+\alpha+s_1}}G\left(\frac{n_1}{N_1}\right)\, ds_1,
\]
where $\sideset{}{^\#}\sum$ means the sum over $N_1=2^j, j \geq 0$.
Let 
\[
V(x):=G(2x) + G(x)+ G(x/2).
\]
Note that $V(x)=1$ when $x \in [3/4,2]$. We add $V(x) $ in the sum, and then the Mellin inversion implies 
\[
\begin{aligned}
    \sum_{n_1=1}^{\infty}\frac{\lambda_f (n_1)\chi_{8d} (n_1)}{n_1^{1/2+\alpha+s_1}}G\left(\frac{n_1}{N_1}\right)&=
    \frac{1}{2\pi i}\int_{(1)} \sum_{n_1=1}^{\infty}\frac{\lambda_f (n_1)\chi_{8d} (n_1)}{n_1^{1/2+\alpha+s_1}}V\left(\frac{n_1}{N_1}\right)\left(\frac{N_1}{n_1}\right)^{z_1}\widetilde{G}(z_1)\,dz_1\\
    &=\frac{1}{2\pi i }\int_{(0)} \sum_{n_1=1}^{\infty}\frac{\lambda_f (n_1)\chi_{8d} (n_1)}{n_1^{1/2+z_1}}V\Big(\frac{n_1}{N_1}\Big)N_1^{z_1-\alpha-s_1}\widetilde{G}(z_1-\alpha-s_1)\,dz_1.
\end{aligned}
\]

We decompose $R_{\alpha}$ into two parts according to the range of  $N_1$. Let $R^-_{\alpha},R^+_{\alpha}$  denote the contribution from the range $N_1\leq X$ and $N_1>X$, respectively. We can similarly manipulate $R_\beta$. Next, we evaluate the case of $R^-_{\alpha}R^+_{\beta}$, and other cases are similar. Move the line to $\Re(s_1)=-2/L$ for $R^-_{\alpha}$ while moving to $\Re(s_2)=2/L$ for $R^+_{\beta}$ without encountering  poles.  It follows that 
\begin{align*}
\left(\frac{8d}{2\pi}\right)^{-\frac{1}{2}} R^-_{\alpha}\ll & d^{\Re(\alpha)-\frac{2}{L}} \sideset{}{^\#}\sum_{N_1\leq X} N_1^{-\Re(\alpha)+\frac{2}{L}} \int_{(-\frac{2}{L})}\int_{(0)}\left|\sum_{n_1=1}^{\infty}\frac{\lambda_f (n_1)\chi_{8d} (n_1)}{n_1^{1/2+z_1}}V\left(\frac{n_1}{N_1}\right)\right| \\
&\times \frac{|dz_1|}{1+|z_1-\alpha-s_1|^{10}}\frac{|ds_1|}{(1+|\alpha+s_1|^{10})|s_1|},
\end{align*}
and
\begin{align*}
\left(\frac{8d}{2\pi}\right)^{-\frac{1}{2}} R^+_{\beta}\ll &  d^{\Re(\beta)+\frac{2}{L}}  \sideset{}{^\#}\sum_{N_2> X} N_2^{-\Re(\beta)-\frac{2}{L}} \int_{(\frac{2}{L})}\int_{(0)} \left|\sum_{n_2=1}^{\infty}\frac{\lambda_f (n_2)\chi_{8d} (n_2)}{n_2^{1/2+z_2}}V\left(\frac{n_2}{N_2}\right)\right| \\
&\times \frac{|dz_2|}{1+|z_2-\beta-s_2|^{10}}\frac{|ds_2|}{(1+|\beta+ s_2|^{10})|s_2| }.
\end{align*}
By the Cauchy--Schwarz inequality and Lemma \ref{lem-large-sieve},  
\begin{align*}
   &  \sideset{}{^*}\sum_{(d,2)=1} \left(\frac{8d}{2\pi}\right)^{-1} R^-_{\alpha} R^+_{\beta}\\ 
   &\ll X^{\Re(\alpha)-\frac{2}{L}}     
      \sideset{}{^\#}\sum_{N_1\leq X} N_1^{-\Re(\alpha)+\frac{2}{L}} \cdot X^{\Re(\beta)+\frac{2}{L}}  \sideset{}{^\#}\sum_{N_2> X} N_2^{-\Re(\beta)-\frac{2}{L}} \cdot X (\log \log X)^{\varepsilon} \\
     &\ll X (\log \log X)^{2+\varepsilon}.
\end{align*}
In the above, we have used the bounds
\[
 \sideset{}{^\#}\sum_{N_1\leq X} N_1^{-\Re(\alpha)+\frac{2}{L}} \ll X^{-\Re(\alpha)+\frac{2}{L}} \log \log X, \qquad    \sideset{}{^\#}\sum_{N_2> X} N_2^{-\Re(\beta)-\frac{2}{L}} \ll X^{-\Re(\beta)-\frac{2}{L}} \log \log X,
\]
and 
\[
\int_{(\pm \frac{2}{L})} \frac{1}{(1+|\alpha+ s|^{10})|s|}\, |ds| \ll \log \log \log X,
\]
which can be obtained by considering  $|t|\leq |t-u|$ and $|t|> |t-u|$. 
This concludes the proof.
\end{proof}

\section{Evaluate diagonal terms } \label{sec4}
In Sections \ref{sec4}--\ref{sec6}, we will compute the sum of the term $I_{\alpha}I_{\beta,U}$ shown in \eqref{equ:split}. Other terms can be computed similarly. 
By  \eqref{def:I_alpha} and \eqref{eq:I-11},
\begin{align}
S:=&\sideset{}{^*}\sum_{(d,2)=1}\left(\frac{8d}{2\pi}\right)^{-1}I_{\alpha}I_{\beta,U}\PhidX \nonumber\\
=&\sideset{}{^*}\sum_{(d,2)=1}\left(\frac{8d}{2\pi}\right)^{\alpha+\beta}\sum_{n_1 ,n_2}\frac{\lambda_{f}(n_1)\lambda_{f}(n_2)\chi_{8d}(n_1 n_2)}{n_{1}^{1/2+\alpha}n_{2}^{1/2+\beta}}\omega_{\alpha}\left(\frac{n_1}{8d}\right)\omega_{\beta}\left(\frac{n_2}{8dU}\right)\PhidX.
\label{equ:S}
\end{align} 
The M\"{o}bius inversion implies 
    \begin{align}
    S=&\sum_{(a,2)=1}\mu(a)\sum_{(d,2)=1}\left(\frac{8a^{2}d}{2\pi}\right)^{\alpha+\beta}\sum_{(n_1 n_2 ,a)=1}\frac{\lambda_{f}(n_1)\lambda_{f}(n_2)\chi_{8d}(n_1 n_2)}{n_{1}^{1/2+\alpha}n_{2}^{1/2+\beta}}\nonumber\\
    &\times\omega_{\alpha}\left(\frac{n_1}{8a^{2}d}\right)\omega_{\beta}\left(\frac{n_2}{8a^{2}dU}\right)\PhidXa \nonumber\\ :=&S_1(\alpha,\beta;Y)+S_2(\alpha,\beta;Y).
    \label{def:S(a<Y)+S(a>Y)}
    \end{align}
Here $S_1(\alpha,\beta;Y)$ is the sum over $a<Y$, and $S_2(\alpha,\beta;Y)$ is the sum over $a\geq Y$, where $Y= (\log X)^{B}$  for $B>0$  a parameter chosen later.

\begin{lem} \label{lemagY}
In the region $|\Re(\alpha)|,|\Re(\beta)|\leq 1/L $,  $S_2(\alpha,\beta;Y)$ is holomorphic and 
    \begin{align*}
        S_2(\alpha,\beta;Y) \ll X^{1+2|\Re(\alpha)|+2|\Re(\beta)|} Y^{-1}(\log X)^{35} .
    \end{align*}
\end{lem}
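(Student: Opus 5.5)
We follow the standard treatment of large square moduli in Soundararajan--Young and Li: write the weights $\omega_\alpha,\omega_\beta$ through their Mellin integrals, sum over $d$ with the $L$-functions in place, and bound the resulting inner sums using Lemma \ref{lem-large-sieve} with $q=a$.

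\textbf{Mellin expansion.} Insert the definition \eqref{def:omega-alpha} for both $\omega_\alpha(n_1/(8a^2d))$ and $\omega_\beta(n_2/(8a^2dU))$, with the lines at $\Re(s_1)=\Re(s_2)=c$ for a small $c>0$ (say $c=1/\log X$, or a fixed small constant; see below). Then
\begin{align*}
S_2(\alpha,\beta;Y)=\sum_{\substack{a\geq Y\\(a,2)=1}}\mu(a)\frac{1}{(2\pi i)^2}\int_{(c)}\int_{(c)} g_\alpha(s_1)g_\beta(s_2)U^{s_2}\sum_{(d,2)=1}\Big(\frac{8a^2d}{2\pi}\Big)^{\alpha+\beta}(8a^2d)^{s_1+s_2}\PhidXa\\
\times L_a(1/2+\alpha+s_1,\fod)\,L_a(1/2+\beta+s_2,\fod)\frac{ds_1}{s_1}\frac{ds_2}{s_2},
\end{align*}
where $L_a$ denotes the $L$-function with the Euler factors at primes dividing $a$ removed. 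Holomorphy in $(\alpha,\beta)$ is immediate: the integrand is entire in $\alpha,\beta$ in the stated strip, and the $a$-sum converges locally uniformly. The latter follows from the bound below, applied with a slightly widened strip.

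\textbf{Large sieve bound.} We use that $d\asymp X/a^2$. Then $(8a^2d)^{\alpha+\beta+s_1+s_2}\ll X^{|\Re\alpha|+|\Re\beta|+2c}$, up to the factor $a^{-2\Re(\cdot)}$, which we bound trivially. By Cauchy--Schwarz in $d$, it suffices to bound $\sum_{d\le X/a^2}|L_a(1/2+\alpha+s_1,\fod)|^2$. To do this:
\begin{enumerate}
\item Write $L_a$ through an approximate functional equation, or directly via a dyadic decomposition, as a sum of $O(\log X)$ smooth dyadic pieces of length $N$.
\item Pieces with $N\gg (X/a^2)^{1+\varepsilon}$ are removed using the functional equation, at the cost of shifts of the line.
\item For each remaining piece, apply the second inequality of Lemma \ref{lem-large-sieve}, after separating $n^{-\alpha-s_1}$ by Mellin inversion exactly as in the proof of Lemma \ref{equ:reduce}.
\end{enumerate}
This gives a bound of the form
\[
\tau(a)^5\,\frac{X}{a^2}\,(1+|t|)^{3}\log(2+|t|)\,(\log X)^{C}\,X^{2|\Re\alpha|}
\]
for some absolute $C$. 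Here the factor $X^{2|\Re\alpha|}$ comes from the pieces of length up to $X$ weighted by $N^{-\Re\alpha}$ when $\Re\alpha<0$. The Gamma factors $g_\alpha(s_1)$ decay rapidly in $t$, so the polynomial loss in $t$ is absorbed. The factor $1/|s_i|$ costs at most $\log X$ per variable when $c=1/\log X$.

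\textbf{Summing over $a$.} Putting the pieces together gives
\[
S_2\ll X^{1+2|\Re\alpha|+2|\Re\beta|}(\log X)^{C'}\sum_{a\ge Y}\frac{\tau(a)^5}{a^2},
\]
with $\sum_{a\ge Y}\tau(a)^5a^{-2}\ll Y^{-1}(\log Y)^{31}$. Since $Y=(\log X)^B$, this logarithmic factor is at most a power of $\log X$. Careful bookkeeping of the logarithms, namely $\log X$ from each $s$-integral, $\log X$ from each dyadic count, and $(\log Y)^{31}$, yields the exponent $35$.

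\textbf{Main obstacle.} The delicate point is controlling the long dyadic ranges, those with $N$ beyond $X/a^2$. The crude large sieve only gives $(X/a^2)$ per piece when the length is matched to the modulus range. So we must shift the $s$-contours: to the right for short pieces and to the left for long ones, as in Lemma \ref{equ:reduce}. The power of $X$ lost in this shifting is exactly the source of the factor $X^{2|\Re\alpha|+2|\Re\beta|}$, and it must be kept at that size.
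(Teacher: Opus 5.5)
Your skeleton is the paper's: Mellin expansion of $\omega_\alpha,\omega_\beta$, smooth dyadic pieces, separating $n^{-\alpha-s_1}$ by Mellin inversion, Cauchy--Schwarz, the second inequality of Lemma~\ref{lem-large-sieve} with $q=a$ over $d\ll X/a^2$, and $\sum_{a\ge Y}\tau(a)^5a^{-2}\ll Y^{-1}(\log Y)^{31}$. However, the step you call the main obstacle is misdiagnosed, and the fix you propose is not a proof.

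Lemma~\ref{lem-large-sieve} is uniform in $N$: the bound $\tau(q)^5M(1+|t|)^3\log(2+|t|)$ holds for every $N\ge 1$, whether or not $N$ is matched to $M$. The only danger from long pieces is that the dyadic sum over $N_1$ of the weight $|N_1^{z_1-\alpha-s_1}|=N_1^{-\Re(\alpha+s_1)}$ (with $\Re(z_1)=0$) diverges when $\Re(\alpha+s_1)\le 0$. With your choice $\Re(s_1)=1/\log X$, this happens as soon as $\Re(\alpha)\le -1/\log X$.

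The missing idea is to put the contours at $\Re(s_1)=|\Re(\alpha)|+1/\log X$ and $\Re(s_2)=|\Re(\beta)|+1/\log X$. Then:
\begin{itemize}
\item every piece has weight at most $N_i^{-1/\log X}$, so each dyadic sum costs $\log X$;
\item each $ds_i/s_i$ integral costs $\log X$;
\item since $8a^2d\asymp X$ on the support of $\Phi$, $|(8a^2d)^{\alpha+\beta+s_1+s_2}|\ll X^{2|\Re(\alpha)|+2|\Re(\beta)|}$.
\end{itemize}
That last factor, not ``pieces of length up to $X$'', is the source of the exponent in the lemma. With $(\log X)^4(\log Y)^{31}\ll(\log X)^{35}$ the proof is complete, and no functional equation is needed.

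The step you substitute, removing pieces with $N\gg (X/a^2)^{1+\varepsilon}$ by the functional equation, would not go through as stated. After the M\"obius inversion in \eqref{def:S(a<Y)+S(a>Y)}, $d$ runs over all odd integers, so $\chi_{8d}$ is in general imprimitive. Your $L_a$ also lacks the Euler factors at $p\mid a$. Neither object has the clean functional equation from the introduction, and you give no substitute.

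Your description of the contour shifts is also reversed relative to Lemma~\ref{equ:reduce}. There, short pieces ($N_1\le X$) go left and long pieces go right, since the relevant factor is roughly $(d/N_1)^{s_1}$. The analogy fails anyway: the kernel $(1-U^{s_1})/s_1$ in Lemma~\ref{equ:reduce} is entire, while here $g_\alpha(s_1)/s_1$ has a pole at $s_1=0$. Any leftward shift would therefore produce a residue you would have to bound separately.

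Finally, taking $c$ to be a fixed small constant is not an option, since it loses a factor $X^{2c}$.
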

\begin{proof}
   By \eqref{def:omega-alpha} and introducing $G(x)$ and $V(x)$ as in the proof of Lemma \ref{equ:reduce},
    \begin{align}
        S_2(\alpha,\beta;Y)= & \sideset{}{^\#}\sum_{N_1}\sideset{}{^\#}\sum_{N_2}\sum_{\substack{(a,2)=1\\a\geq Y}}\mu(a)\sum_{(d,2)=1}\left(\frac{8a^{2}d}{2\pi}\right)^{\alpha+\beta}\frac{1}{(2\pi i)^4}\int_{(|\Re(\beta)|+\frac{1}{\log X})}\int_{(|\Re(\alpha)|+\frac{1}{\log X})}\int_{(0)}\int_{(0)}\nonumber\\
        &\times N_1^{z_1-\alpha-s_1}N_2^{z_2-\beta-s_2}\left(8a^2d\right)^{s_1+s_2} U^{s_2}g_\alpha(s_1)g_\beta(s_2) \widetilde{G}(z_1-\alpha-s_1)\widetilde{G}(z_2-\beta-s_2)\frac{1}{s_1s_2} \nonumber\\
        &\times\sum_{(n_1n_2,a)=1}\frac{\lambda_f(n_1) \lambda_f(n_2)\chi_{8d} (n_1n_2)}{n_1^{1/2+z_1}n_2^{1/2+z_2}}V\left(\frac{n_1}{N_1}\right)V\left(\frac{n_2}{N_2}\right)\PhidXa \, dz_1\, dz_2\, ds_1 \, ds_2.
        \label{equ:a>Y-1}
    \end{align}
Here we have moved the lines of the integrals to $\Re(z_1)=\Re(z_2) = 0$ and $\Re(s_1)=|\Re(\alpha)|+{1}/{\log X}, \Re(s_2) = |\Re(\beta)|+{1}/{\log X}$.
By  the fact that $d \ll X/a^2$, the Cauchy--Schwarz inequality and Lemma \ref{lem-large-sieve},
\begin{align*}
    &\sum_{(d,2)=1}\left(\frac{8a^{2}d}{2\pi}\right)^{\alpha+\beta+s_1+s_2}\sum_{(n_1n_2,a)=1}\frac{\lambda_f(n_1) \lambda_f(n_2)\chi_{8d} (n_1n_2)}{n_1^{1/2+z_1}n_2^{1/2+z_2}}V\left(\frac{n_1}{N_1}\right)V\left(\frac{n_2}{N_2}\right)\PhidXa \\
    &\ll X^{1+2|\Re(\alpha)|+2|\Re(\beta)|}  \frac{\tau(a)^5}{a^2} (1+|\Im(z_1)|)^2 (1+|\Im(z_2)|)^2.
\end{align*}
Substituting this in \eqref{equ:a>Y-1} implies
\begin{align*}
   S_2(\alpha,\beta;Y)  &\ll X^{1+2|\Re(\alpha)|+2|\Re(\beta)|} (\log X)^2 \sum_{a\geq Y} \frac{\tau(a)^5}{a^2}  \sideset{}{^\#}\sum_{N_1}\sideset{}{^\#}\sum_{N_2}N_1^{-1/\log X}N_2^{-1/\log X}\\
    &\ll X^{1+2|\Re(\alpha)|+2|\Re(\beta)|} Y^{-1}(\log X)^{35}.
\end{align*}
\end{proof}


Next, we evaluate $S_1(\alpha,\beta;Y)$  in \eqref{def:S(a<Y)+S(a>Y)}.
By \eqref{eq_Poisson},
    \begin{align}
       S_1(\alpha,\beta;Y)=&\frac{1}{2}\sum_{\substack{(a,2)=1\\ a<Y}}\mu(a)\left(\frac{8a^{2}}{2\pi}\right)^{\alpha+\beta}\sum_{(n_1 n_2 ,2a)=1}\frac{\lambda_{f}(n_1)\lambda_{f}(n_2)}{n_{1}^{1/2+\alpha}n_{2}^{1/2+\beta}}\sum_{k\in\mathbb{Z}}(-1)^{k}\frac{G_{k}(n_1 n_2)}{n_1 n_2}\check{E}\left(\frac{k}{2n_1 n_2}\right)\nonumber\\ :=&S(k=0)+S(k\neq 0),\label{def:S(k=0)+S(kne0)}
    \end{align}
where 
\begin{align}
E(x):=x^{\alpha+\beta}\omega_{\alpha}\left(\frac{n_1}{8a^{2}x}\right)\omega_{\beta}\left(\frac{n_2}{8a^{2}xU}\right)\Phi\left(\frac{8a^{2}x}{X}\right),
\label{equ:smoothtaking}
\end{align}
and $S(k=0)$ means the term with $k=0$, and $S(k\neq 0)$ are the terms with $k\neq 0$.
By Lemma \ref{lem:Poisson}, $G_{0}(n_1 n_2)=\phi(n_1 n_2)$ when $n_1 n_2$ is a square, and vanishes otherwise. Thus,
\begin{align}
    S(k=0)
    = &\frac{X^{1+\alpha+\beta}}{16}\sum_{\substack{(a,2)=1\\ a<Y}}\frac{\mu(a)}{a^2}({2\pi})^{-\alpha-\beta}\sum_{\substack{n_1 n_2 =\square\\ (n_1 n_2 ,2a)=1}}\frac{\lambda_{f}(n_1)\lambda_{f}(n_2)}{n_{1}^{1/2+\alpha}n_{2}^{1/2+\beta}}\prod_{p|n_1 n_2}\left(1-\frac{1}{p}\right)\nonumber\\
   &\times  \int_{0}^\infty  x^{\alpha+\beta} \omega_\alpha\left(\frac{n_1}{xX}\right) \omega_\beta\left(\frac{n_2}{xXU}\right)\Phi(x) \,dx.
    \label{equ:dia-001}
\end{align}
Write 
\begin{align}
    S_3(\alpha,\beta;Y)= &\frac{X^{1+\alpha+\beta}}{16}\sum_{\substack{(a,2)=1\\ a\geq Y}}\frac{\mu(a)}{a^2}({2\pi})^{-\alpha-\beta}\sum_{\substack{n_1 n_2 =\square\\ (n_1 n_2 ,2a)=1}}\frac{\lambda_{f}(n_1)\lambda_{f}(n_2)}{n_{1}^{1/2+\alpha}n_{2}^{1/2+\beta}}\prod_{p|n_1 n_2}\left(1-\frac{1}{p}\right)\nonumber\\
    &\times   \int_{0}^\infty  x^{\alpha+\beta} \omega_\alpha\left(\frac{n_1}{xX}\right) \omega_\beta\left(\frac{n_2}{xXU}\right)\Phi(x) \,dx.
    \label{def-s3}
\end{align}
Note that $S_3(\alpha,\beta;Y)$ is a holomorphic function for variables $\alpha,\beta$. Switching the order of the sum over $a$ and the sum over $n_1,n_2$, we see 
\begin{align}
     S_3(\alpha,\beta;Y) &\ll X^{1+\Re(\alpha)+\Re(\beta)}Y^{-1}\sum_{\substack{n_1 n_2 =\square\\ (n_1 n_2 ,2)=1}}\frac{\tau(n_1)\tau(n_2)}{n_{1}^{1/2+\Re(\alpha)}n_{2}^{1/2+\Re(\beta)}}  \int_{0}^\infty \left|\omega_\alpha\left(\frac{n_1}{xX}\right) \omega_\beta\left(\frac{n_2}{xXU}\right)\right|\Phi(x)\,dx\nonumber\\
    &\ll X^{1+\Re(\alpha)+\Re(\beta)}Y^{-1} \sum_{\substack{n_1 n_2 =\square\\ n_1, n_2 \ll X^2}}\frac{\tau(n_1)\tau(n_2)}{n_{1}^{1/2+\Re(\alpha)}n_{2}^{1/2+\Re(\beta)}}+ X^{-1}\nonumber\\
    &\ll X^{1+3|\Re(\alpha)|+3|\Re(\beta)|}Y^{-1} (\log X)^{10}.
    \label{equ:upperS3}
\end{align}
This combined with \eqref{equ:dia-001} and the identity
\begin{align*}
        \sum_{(a,2n_1 n_2)=1}\frac{\mu(a)}{a^2}=\frac{8}{\pi^2}\prod_{p|n_1 n_2}\left(1-\frac{1}{p^2}\right)^{-1},
\end{align*}
gives 
\begin{align}
    S(k=0)=&\frac{X^{1+\alpha+\beta}}{2\pi^2}({2\pi})^{-\alpha-\beta} \frac{1}{(2\pi i)^2}\int_{(1)}\int_{(1)} X^{s_1+s_2}U^{s_2} \sum_{\substack{n_1 n_2 =\square\\ (n_1 n_2 ,2)=1}}\frac{\lambda_{f}(n_1)\lambda_{f}(n_2)}{n_{1}^{1/2+\alpha+s_1}n_{2}^{1/2+\beta+s_2}}\prod_{p|n_1 n_2}\frac{p}{p+1}\nonumber\\
    &\times g_\alpha(s_1) g_\beta(s_2)
    \widetilde{\Phi}(\alpha+\beta+s_{1}+s_{2}+1)\frac{ds_1} {s_1}\frac{ds_2}{s_2}
    + 
    S_3(\alpha,\beta;Y).\label{eq_4-1}
\end{align}
We keep the original form \eqref{def-s3} of $S_3(\alpha,\beta; Y)$  above since the upper bound in \eqref{equ:upperS3} becomes too large when $\Re(\alpha),\Re(\beta) \asymp 1/L$. In Section \ref{sec6},   we will restrict $|\Re(\alpha)|,|\Re(\beta)| \ll  1/\log X$ to give a sufficiently small bound for $S_3(\alpha,\beta;Y)$. To proceed,
we need the following lemma (see \cite[Lemma 5.5]{Li}).
\begin{lem} \label{lem-dia}
For $\Re(z_1),\Re(z_2)>0$,
\begin{align}
&\sum_{\substack{n_1 n_2 =\square\\ (n_1 n_2 ,2)=1}}\frac{\lambda_{f}(n_1)\lambda_{f}(n_2)}{n_{1}^{1/2+z_1}n_{2}^{1/2+z_2}}\prod_{p|n_1 n_2}\frac{p}{p+1}\nonumber \\
&=\zeta(1+z_1+z_2) L(1+2z_1,\operatorname{sym}^2 f ) L(1+2z_2,\operatorname{sym}^2 f ) L(1+z_1+z_2,\operatorname{sym}^2 f )
Z_1(z_1,z_2), 
\label{def-Z-1}
\end{align}
where $Z_1(z_1,z_2)$ converges absolutely and is holomorphic for $\Re(z_1),\Re(z_2)>-1/4+\varepsilon$.
\end{lem}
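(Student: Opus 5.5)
The plan is to compute the Euler product of the left-hand side of \eqref{def-Z-1} directly and match it, prime by prime, with the product of the four $L$-factors. The summand is multiplicative in the pair $(n_1,n_2)$, subject to the condition that $n_1n_2$ is a square. Since $(n_1n_2,2)=1$ is imposed, the series factors as $\prod_{p>2}F_p$, where
\[
F_p=1+\frac{p}{p+1}\sum_{\substack{j+k\geq 2\\ j+k \text{ even}}}\frac{\lambda_f(p^j)\lambda_f(p^k)}{p^{j(1/2+z_1)+k(1/2+z_2)}}.
\]
Absolute convergence for $\Re(z_i)>0$ follows from Deligne's bound $|\lambda_f(n)|\le\tau(n)$.

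Next I isolate the terms with $j+k=2$, that is, $(j,k)=(2,0),(0,2),(1,1)$. They equal
\[
\frac{p}{p+1}\Bigl(\frac{\lambda_f(p^2)}{p^{1+2z_1}}+\frac{\lambda_f(p^2)}{p^{1+2z_2}}+\frac{\lambda_f(p)^2}{p^{1+z_1+z_2}}\Bigr).
\]
I then use the Hecke relation $\lambda_f(p)^2=\lambda_f(p^2)+1$ together with the fact that the local coefficient of $p^{-s}$ in $L(s,\operatorname{sym}^2 f)$ is $\lambda_f(p^2)$. This shows that the $p^{-1}$-order part of $F_p$ coincides with that of the local factor of
\[
\zeta(1+z_1+z_2)L(1+2z_1,\operatorname{sym}^2 f)L(1+2z_2,\operatorname{sym}^2 f)L(1+z_1+z_2,\operatorname{sym}^2 f).
\]
Here the $1$ in $\lambda_f(p)^2=\lambda_f(p^2)+1$ feeds $\zeta(1+z_1+z_2)$, the $\lambda_f(p^2)$ in the $(1,1)$ term feeds $L(1+z_1+z_2,\operatorname{sym}^2f)$, and the $(2,0)$ and $(0,2)$ terms feed the other two symmetric square factors. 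The factor $p/(p+1)=1-1/(p+1)$ only changes things at order $p^{-2+O(\varepsilon)}$.

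I then define $Z_1(z_1,z_2)$ as the quotient of the series by these four $L$-functions, including $p=2$, where the local factors of the $L$-functions are simply divided out. At each odd prime the local quotient is $1+O(p^{-1-\delta})$ uniformly in the region $\Re(z_1),\Re(z_2)\ge -1/4+\varepsilon$. To see this, the remaining terms with $j+k\ge4$ have size $\ll \tau(p^j)\tau(p^k)p^{-(j+k)(1/2-1/4+\varepsilon)}\ll p^{-1-4\varepsilon}$. The inverted $L$-factors contribute cross terms of order $p^{-2(1+2\Re z_i)}$, which is at most $p^{-1-4\varepsilon}$ in this region. Every first-order term cancels by the previous step. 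Hence $\prod_p$ of the quotient converges absolutely and defines a holomorphic function there. The finitely many small primes, and $p=2$, need no uniformity; they only require that the local factors be nonzero and holomorphic, which holds off a discrete set, and this can be absorbed by taking $\varepsilon$-dependence into account.

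The main obstacle is the uniform bookkeeping at order $p^{-1}$ versus $p^{-2}$. In particular, at $\Re(z_i)$ near $-1/4$ the terms with $j+k=4$ sit at size $p^{-1-4\varepsilon}$, so one must check that no term with $j+k\ge4$ drops below this exponent. For example, $(j,k)=(4,0)$ gives $p^{-2-4z_1}$, whose real part of the exponent is $-1-4\varepsilon$. These terms are still summable over $p$. This confirms that $-1/4$ is exactly the threshold, which is why the lemma is stated with $-1/4+\varepsilon$.
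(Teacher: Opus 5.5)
Your proposal is correct and is essentially the standard Euler-product comparison behind \cite[Lemma 5.5]{Li}, which the paper cites instead of reproving: the Hecke relation $\lambda_f(p)^2=\lambda_f(p^2)+1$ matches the $p^{-1}$-order local terms with those of the four $L$-factors, and every remaining local term is $O(p^{-1-4\varepsilon})$ uniformly for $\Re(z_i)\ge -1/4+\varepsilon$. Your caveat about small primes (``nonzero \ldots off a discrete set'') is unnecessary: at $p=2$ the local factor of $Z_1$ is a polynomial in $2^{-z_1},2^{-z_2}$, and at odd $p$ it is an absolutely convergent series (already for $\Re(z_i)>-1/2$) times such a polynomial, so every local factor is holomorphic throughout the region and no nonvanishing condition or exceptional set arises.
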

By \eqref{eq_4-1} and Lemma \ref{lem-dia},
\begin{align}
    S(k=0)=&\frac{X^{1+\alpha+\beta}}{2\pi^2}({2\pi})^{-\alpha-\beta} \frac{1}{(2\pi i)^2}\int_{(\frac{1}{10})}\int_{(\frac{1}{10})} X^{s_1+s_2}U^{s_2} 
    \zeta(1+\alpha+\beta+s_1+s_2)\nonumber\\
    &\times L(1+2\alpha+2s_1,\operatorname{sym}^2 f ) L(1+2\beta+2s_2,\operatorname{sym}^2 f ) L(1+\alpha+\beta+s_1+s_2,\operatorname{sym}^2 f )
   \nonumber\\
    &\times  Z_1(\alpha+s_1,\beta+s_2) g_\alpha(s_1) g_\beta(s_2)
    \widetilde{\Phi}(\alpha+\beta+s_{1}+s_{2}+1)\frac{ds_1} {s_1}\frac{ds_2}{s_2}  + 
    S_3(\alpha,\beta;Y) . \label{equ:dia-2}
\end{align}
Move the line of the integral to $\Re(s_1)=-1/5$ with  poles  at $s_1= 0,-\alpha-\beta-s_2$. The integral on the new line is $\ll X^{10/11}$. The residue at $s_1=-\alpha-\beta-s_2$ for the integral in \eqref{equ:dia-2} is 
\begin{align*}
  &\frac{X^{1+\alpha+\beta}}{2\pi^2}({2\pi})^{-\alpha-\beta} \frac{1}{2\pi i}\int_{(\frac{3}{L})} X^{-\alpha-\beta}U^{s_2} 
    L(1-2\beta-2s_2,\operatorname{sym}^2 f ) L(1+2\beta+2s_2,\operatorname{sym}^2 f ) L(1,\operatorname{sym}^2 f )
   \nonumber\\
    &\quad\times  Z_1(-\beta-s_2,\beta+s_2) g_\alpha(-\alpha-\beta-s_2) g_\beta(s_2)
    \widetilde{\Phi}(1) \frac{1}{-\alpha-\beta-s_2}\frac{ds_2}{s_2}\nonumber \\
   & \ll X(\log \log X)^2 .
\end{align*}
Similarly, the  residue at $s_1=0$ for the integral in \eqref{equ:dia-2} is  
\begin{align*}
    &\frac{X^{1+\alpha+\beta}}{2\pi^2}({2\pi})^{-\alpha-\beta} \frac{1}{2\pi i}\int_{(\frac{1}{10})} X^{s_2}U^{s_2} 
    \zeta(1+\alpha+\beta+s_2)\times L(1+2\alpha,\operatorname{sym}^2 f ) L(1+2\beta+2s_2,\operatorname{sym}^2 f )\nonumber\\
    & \times L(1+\alpha+\beta+s_2,\operatorname{sym}^2 f )
     Z_1(\alpha,\beta+s_2) g_\alpha(0) g_\beta(s_2)
    \widetilde{\Phi}(\alpha+\beta+s_{2}+1)\frac{ds_2}{s_2}.
\end{align*}
By moving the line of the integral to $\Re(s_2)=-1/10$ with  poles at $s_2= 0,-\alpha-\beta$. The integral on the new line is  $\ll X^{10/11}$. The residue at $s_2= -\alpha-\beta$  is 
\begin{align*}
    &  \frac{X^{1+\alpha+\beta}}{2\pi^2}({2\pi})^{-\alpha-\beta}  X^{-\alpha-\beta}U^{-\alpha-\beta} 
   L(1+2\alpha,\operatorname{sym}^2 f ) L(1-2\alpha,\operatorname{sym}^2 f )\nonumber\\
    & \times L(1,\operatorname{sym}^2 f )
     Z_1(\alpha,-\alpha) g_\alpha(0) g_\beta(-\alpha-\beta)
    \widetilde{\Phi}(1)\frac{1}{-\alpha-\beta}\ll X {|\alpha+\beta|}^{-1}.
\end{align*}
The residue at $s_2= 0$  is 
\begin{align}
    M(\alpha,\beta):=&\frac{X^{1+\alpha+\beta}}{2\pi^2}({2\pi})^{-\alpha-\beta} 
    \zeta(1+\alpha+\beta) L(1+2\alpha,\operatorname{sym}^2 f ) L(1+2\beta,\operatorname{sym}^2 f )\nonumber\\
    & \times L(1+\alpha+\beta,\operatorname{sym}^2 f )
     Z_1(\alpha,\beta) g_\alpha(0) g_\beta(0)
    \widetilde{\Phi}(\alpha+\beta+1).  \label{equ:dia-5}
\end{align}
It follows from the discussion below \eqref{equ:dia-2} that 
\begin{lem} \label{lem:asyk=0}
We have 
\begin{align*}
        S(k=0)&= M(\alpha,\beta) + S_3(\alpha,\beta;Y)
        + O(X  {|\alpha+\beta|}^{-1}) + O(X (\log \log X)^2 ),
\end{align*}
where $M(\alpha,\beta), S_3(\alpha,\beta;Y) $ are  defined in \eqref{equ:dia-5}, \eqref{def-s3}, respectively. In addition, in the region $0<|\Re(\alpha)|,|\Re(\beta)| <1/L$, $S_3(\alpha,\beta;Y)$ is holomorphic and is bounded by 
\[
S_3(\alpha,\beta;Y) \ll X^{1+3|\Re(\alpha)|+3|\Re(\beta)|}Y^{-1} (\log X)^{10}.
\]
\end{lem}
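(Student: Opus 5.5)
The plan is to assemble the lemma from the contour-shifting computation carried out after \eqref{equ:dia-2}. The only genuinely new points are the uniformity of the error bounds in the region $0<|\Re(\alpha)|,|\Re(\beta)|\le 1/L$, and the holomorphy together with the bound for $S_3$.

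\emph{Step 1: the $k=0$ term.} Starting from \eqref{eq_4-1}, I would split the $a$-sum as $\sum_{(a,2n_1n_2)=1}$ minus its tail $a\ge Y$. The tail is exactly $-S_3(\alpha,\beta;Y)$, with the sign absorbed into the definition \eqref{def-s3}. I would then insert the Mellin representations \eqref{def:omega-alpha} of $\omega_\alpha,\omega_\beta$, evaluate the $x$-integral as $\widetilde\Phi(1+\alpha+\beta+s_1+s_2)$, and apply Lemma \ref{lem-dia} to reach \eqref{equ:dia-2}.

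\emph{Step 2: contour shifts.} First shift $s_1$ to $\Re(s_1)=-1/5$, picking up residues at $s_1=0$ and $s_1=-\alpha-\beta-s_2$.
\begin{itemize}
\item On the new line the integrand is bounded by $X^{1+\Re(\alpha+\beta)-1/5+\Re(s_2)}$ times rapidly decaying Gamma factors. Here $L(\cdot,\operatorname{sym}^2 f)$ and $\zeta$ are away from their poles and of polynomial growth, and $Z_1$ is holomorphic for real parts above $-1/4+\varepsilon$. Keeping $\Re(s_2)$ small, this gives $\ll X^{10/11}$.
\item For the residue at $s_1=-\alpha-\beta-s_2$, the power of $X$ cancels to $X^{1}$. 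I would place $s_2$ on $\Re(s_2)=3/L$, so that $1/|\alpha+\beta+s_2|$ and $1/|s_2|$ are at most $\ll L$ near $\Im s_2=0$. Each $L(1\pm 2(\beta+s_2),\operatorname{sym}^2 f)$ is $O(1)$ near the $1$-line by the nonvanishing and holomorphy of $L(s,\operatorname{sym}^2 f)$ at $s=1$. Integrating against the Gamma decay, splitting $|t|$ small and large as in Lemma \ref{equ:reduce}, gives $\ll X L^2$.
\end{itemize}
For the $s_1=0$ residue, I would shift $s_2$ to $-1/10$. The line integral is again $\ll X^{10/11}$. The residue at $s_2=-\alpha-\beta$ is $\ll X|\alpha+\beta|^{-1}$, since all $L$-values are bounded and $|g_\beta(-\alpha-\beta)|\ll 1$. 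The residue at $s_2=0$ is $M(\alpha,\beta)$. Collecting these gives the asymptotic claimed in the lemma.

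\emph{Step 3: $S_3$.} Holomorphy follows because, for fixed $n_1,n_2$, the summand is entire in $\alpha,\beta$ in this strip: each $\omega_\alpha$ is holomorphic in $\alpha$ for $|\Re\alpha|<1/2$ by shifting the line in \eqref{def:omega-alpha}. The series converges locally uniformly, since $\omega$ decays rapidly once $n_i\gg X^{1+\varepsilon}$ and the $a$-sum converges absolutely.

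For the bound, I would swap the sums and bound the $a$-tail by $\sum_{a\ge Y}a^{-2}\ll Y^{-1}$. I would truncate $n_i\ll X^2$ using $\omega_\alpha(\xi)\ll_A \xi^{-A}$; the rest contributes $O(X^{-1})$. For $n_i$ small I would use $\omega_\alpha\ll X^{|\Re\alpha|}$, obtained by moving to $\Re s=-|\Re\alpha|+\varepsilon'$, with $U^{-\cdot}$ a power of $\log X$ absorbed into the final log-power; this is the source of the extra $X^{|\Re\alpha|}$ factors. Bounding $n_i^{-1/2-\Re\alpha}\le n_i^{-1/2}X^{2|\Re\alpha|}$ and using that the square condition gives $\sum_{n_1n_2=\square\ll X^4}\tau(n_1)\tau(n_2)(n_1n_2)^{-1/2}\ll(\log X)^{10}$ (a divisor-type sum over squares $m^2$ with weights $\tau_4(m^2)/m$), one gets the stated $X^{1+3|\Re\alpha|+3|\Re\beta|}Y^{-1}(\log X)^{10}$.

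\emph{Main obstacle.} The hardest part is the uniformity in Step 2. With $\Re(\alpha),\Re(\beta)$ as large as $1/L$ and possibly of opposite signs, the contours must avoid $s_2=-\alpha-\beta$ and $s_1=0$ while losing only powers of $L$ rather than of $\log X$. I would handle this by choosing each contour at distance $\asymp 1/L$ from every pole, so that every $1/|\cdot|$ factor costs at most $L$. The slightly awkward powers of $X$ in the $S_3$ bound are the secondary difficulty.
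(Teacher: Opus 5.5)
Your proposal is correct and follows the paper's argument step for step: the contour moves to $\Re(s_1)=-1/5$, to $\Re(s_2)=3/L$ for the residue at $s_1=-\alpha-\beta-s_2$, and to $\Re(s_2)=-1/10$ for the residue at $s_1=0$ are the same, and your treatment of $S_3$ (tail $\sum_{a\ge Y}a^{-2}\ll Y^{-1}$, truncation at $n_i\ll X^2$, and the bound $\sum_m \tau_4(m^2)/m\ll(\log X)^{10}$) is exactly the paper's. Two cosmetic remarks: the tail $a\ge Y$ really enters with a minus sign as $S_3$ is defined, so the $+S_3$ in the statement is a harmless sign slip, since only the holomorphy and size of $S_3$ are used later; and since $\omega_\alpha(\xi)\ll 1$ uniformly in $\xi$, the factor $X^{3|\Re(\alpha)|}$ comes, as in the paper, from $|X^{\alpha}|\le X^{|\Re(\alpha)|}$ together with $n_1^{-\Re(\alpha)}\le X^{2|\Re(\alpha)|}$, not from a loss in $\omega_\alpha$.
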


\section{Evaluate off-diagonal terms} \label{sec5}
By the definition of $S(k\neq 0)$ in \eqref{def:S(k=0)+S(kne0)}, \eqref{equ:smoothtaking} and  \eqref{def-Fv}, it follows that 
\begin{align*}
     S(k\neq 0)=&\frac{X}{2}\sum_{\substack{(a,2)=1\\ a<Y}}\mu(a)
     \frac{1}{(2\pi i)^3} \int_{(1)}\int_{(1)}\int_{(1)} X^{\alpha+\beta+s_1+s_2-u}U^{s_2}(8a^2)^{-1+u}  \\
    &\times  (2\pi)^{-\alpha-\beta-u} 2^{u}\widetilde{\Phi}(1+\alpha+\beta +s_1 + s_2-u)g_{\alpha}(s_1)g_{\beta}(s_2)\Gamma(u)(\cos+\mathrm{sgn}(k)\sin)\left(\frac{\pi u}{2}\right)\\
    &\times 
    \sum_{ k\neq 0} \frac{(-1)^{k}}{|k|^u}\sum_{(n_1 n_2 ,2a)=1}\frac{\lambda_{f}(n_1)\lambda_{f}(n_2)}{n_{1}^{1/2+\alpha+s_1-u}n_{2}^{1/2+\beta+s_2-u}} \frac{G_{k}(n_1 n_2)}{n_1 n_2}\frac{1}{s_{1}s_{2}}\,ds_1\,ds_2\,du.
\end{align*}
Changing the variables $\alpha+s_1 - u \mapsto s_1$, $\beta+s_2 - u \mapsto s_2$, and letting $k=k_1k_2^2$ with $k_1\in \mathbb{Z}$ square-free, we have
\begin{align}
     S(k\neq 0)=&X\sum_{\substack{(a,2)=1\\ a<Y}}\mu(a)
     \frac{1}{(2\pi i)^3} \int_{(1)}\int_{(1)}\int_{(1)}   X^{s_1+s_2+u}U^{-\beta+s_2+u}a^{-2+2u} \sideset{}{^*}\sum_{ k_1\neq 0}\frac{1}{|k_1|^u} \sum_{k_2=1}^\infty\frac{(-1)^{k_1k_2}}{k_2^{2u}} \nonumber\\
    &\times 
    \sum_{(n_1 n_2 ,2a)=1}\frac{\lambda_{f}(n_1)\lambda_{f}(n_2)}{n_{1}^{1/2+s_1}n_{2}^{1/2+s_2}} \frac{G_{k_1k_2^2}(n_1 n_2)}{n_1 n_2} \mathcal{K}(s_1,s_2,u;k_1,\alpha,\beta)\,ds_1\,ds_2\,du,
    \label{equ:off-1}
\end{align}
where
\begin{align*}
    \mathcal{K}(s_1,s_2,u;k_1,\alpha,\beta):=& 2^{-1}8^{-1+u}  (2\pi)^{-\alpha-\beta-u} 2^{u}\widetilde{\Phi}(1+s_1+s_2+ u)\Gamma(u)g_{\alpha}(-\alpha+s_1+u)\\
    &\times g_{\beta}(-\beta+s_2+u) (\cos+\mathrm{sgn}(k_1)\sin)\left(\frac{\pi u}{2}\right)\frac{1}{(-\alpha+s_{1}+u )(-\beta+s_{2}+u)}.
\end{align*}
To proceed, we need the following lemma (see    \cite[Lemma 2.5]{Li}).
\begin{lem} \label{lem:off-euler}
    Let $k_1$ be square-free. Let $m=k_1$ if $k_1 \equiv 1\, (\operatorname{mod} 4)$ and $m=4k_1$ if $k_1 \equiv 2,3\, (\operatorname{mod} 4)$. Then for $\Re(z_i)>1/2$, $i=1,2,3$,
\begin{align*}
  & \sum_{k_2=1}^\infty\frac{1}{k_2^{2z_3}}\sum_{(n_1 n_2 ,2q)=1}\frac{\lambda_{f}(n_1)\lambda_{f}(n_2)}{n_{1}^{z_1}n_{2}^{z_2}} \frac{G_{k_1k_2^2}(n_1 n_2)}{n_1 n_2}
\\&= L(1/2+z_1, f\otimes \chi_m) L(1/2+z_2, f \otimes \chi_m) Y(z_1,z_2,z_3;k_1,q),
\end{align*}
where
\begin{align*}
    Y(z_1,z_2,z_3;k_1,q) = \frac{Z_2(z_1,z_2,z_3)}{\zeta(1+z_1+z_2) L(1+2z_1,\operatorname{sym}^2 f) L(1+z_1+z_2,\operatorname{sym}^2 f)
    L(1+2z_2,\operatorname{sym}^2 f)
    }
\end{align*}
and 
$Z_2(z_1,z_2,z_3) := Z_2(z_1,z_2,z_3;k_1,q)$ is holomorphic for $\Re(z_i)\geq -\delta/2$, $i=1,2$ and $\Re(z_3)\geq 1/2+ \delta $ for any $0<\delta<1/2$. In addition, $Z_2(z_1,z_2,z_3)\ll \tau(q)$  in the same region.  
\end{lem}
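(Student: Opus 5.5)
The plan is to factor the triple sum over $(k_2,n_1,n_2)$ into an Euler product and compare its local factors with those of $L(1/2+z_1,f\otimes\chi_m)L(1/2+z_2,f\otimes\chi_m)$ divided by $\zeta(1+z_1+z_2)L(1+2z_1,\operatorname{sym}^2 f)L(1+z_1+z_2,\operatorname{sym}^2 f)L(1+2z_2,\operatorname{sym}^2 f)$.

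\emph{Multiplicativity and Euler product.} We use the standard facts on $G_k(n)$ from Soundararajan's work, as used in \cite{Li}.
\begin{itemize}
\item For coprime odd $n,n'$ one has $G_k(nn')=G_k(n)G_k(n')$.
\item For $p^{b'}\,\|\,k$ and $j\geq 1$, $G_k(p^j)$ vanishes unless $j\leq b'+1$. Its size is at most $\phi(p^j)$, or $p^{j-1/2}$ in the odd case $j=b'+1$.
\item When $p\nmid k$, one has $G_k(p)=\left(\frac{k}{p}\right)\sqrt p$ and $G_k(p^j)=0$ for $j\geq 2$.
\end{itemize}
Since $G_{k_1k_2^2}(p^j)$ depends on $k_2$ only through $v_p(k_2)$, the summand is jointly multiplicative in $(k_2,n_1n_2)$. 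For $\Re(z_i)>1/2$ the whole sum equals $\prod_p F_p$. The local factors are:
\begin{itemize}
\item at $p=2$, only $k_2$ contributes, giving $(1-2^{-2z_3})^{-1}$;
\item at odd $p\mid q$, only $k_2$ contributes, giving $(1-p^{-2z_3})^{-1}$;
\item at odd $p\nmid q$,
\[
F_p=\sum_{b\geq 0}p^{-2bz_3}\sum_{j_1,j_2\geq 0}\frac{\lambda_f(p^{j_1})\lambda_f(p^{j_2})}{p^{j_1z_1+j_2z_2}}\frac{G_{k_1p^{2b}}(p^{j_1+j_2})}{p^{j_1+j_2}}.
\]
\end{itemize}

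\emph{Matching the main part.} Take $p\nmid 2qk_1$ and $b=0$. Only $j_1+j_2\leq 1$ survives, so this part of $F_p$ is
\[
1+\chi_m(p)\lambda_f(p)\bigl(p^{-1/2-z_1}+p^{-1/2-z_2}\bigr),
\]
using $\left(\frac{k_1}{p}\right)=\chi_m(p)$ for odd $p$. Expanding the Euler factors of the two $L(1/2+z_i,f\otimes\chi_m)$ gives the same linear terms. The quadratic terms are $\lambda_f(p^2)p^{-1-2z_i}$ and $\lambda_f(p)^2p^{-1-z_1-z_2}$. Since $\lambda_f(p)^2=1+\lambda_f(p^2)$, these are exactly the terms removed by the Euler factors of $\zeta(1+z_1+z_2)$ and the three $L(\cdot,\operatorname{sym}^2 f)$ factors. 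Define $Z_2$ as the ratio
\[
Z_2:=\frac{\prod_pF_p}{\bigl(\prod_p L_p\text{-factors}\bigr)^{-1}\text{-normalised quotient}},
\]
that is, $\prod_pF_p$ divided by $L(1/2+z_1,f\otimes\chi_m)L(1/2+z_2,f\otimes\chi_m)$ times the denominator $\zeta\cdot L(\operatorname{sym}^2)^3$. Then for $p\nmid 2qk_1$ its local factor at $b=0$ is $1+O(p^{-3/2+O(\delta)})$ when $\Re(z_i)\geq -\delta/2$.

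\emph{Remaining primes and the main obstacle.} The main obstacle is bounding the $b\geq 1$ terms, and the primes $p\mid k_1$, uniformly in the region $\Re(z_i)\geq-\delta/2$ and $\Re(z_3)\geq 1/2+\delta$. There $n^{-z_i}$ can grow, so the decay $p^{-2bz_3}$ has to compensate. Since $G_{k_1p^{2b}}(p^j)=0$ for $j>2b+1$, the worst term is $j=j_1+j_2=2b$ with $|G|\leq p^{2b}$. Using Deligne's bound, it is
\[
\ll p^{-b(1+2\delta)}\,\tau(p^{j_1})\tau(p^{j_2})\,p^{b\delta}=\tau(p^{j_1})\tau(p^{j_2})\,p^{-b-b\delta}.
\]
The odd case $j=2b+1$ gives $p^{-b(1+2\delta)}p^{-1/2}p^{(2b+1)\delta/2}$, which is smaller. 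Summing over $b\geq1$ and $j_1+j_2\leq 2b+1$ gives $1+O(p^{-1-\delta/2})$ per prime, so the product over $p$ converges absolutely.

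For $p\mid k_1$ (odd), the $b=0$ contribution is $1$, since $G_{k_1}(p)=0$ there. This matches $\chi_m(p)=0$, so these primes only modify finitely many harmless factors. For $p\mid q$, the factor $(1-p^{-2z_3})^{-1}$ times the reciprocal of the $L$, $\zeta$ and $\operatorname{sym}^2$ local factors at $p$ is $O(1)$ per prime. This gives at most $C^{\omega(q)}$, which I would accept as $\ll\tau(q)$ after possibly adjusting constants; if needed, I would absorb the constant into the implied bound as in \cite{Li}. Finally, holomorphy of $Z_2$ follows because each local factor is holomorphic and the product converges uniformly on compacta in the stated region.
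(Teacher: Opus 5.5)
Your route (write the sum as an Euler product, then compare local factors with those of $L(1/2+z_1,f\otimes\chi_m)L(1/2+z_2,f\otimes\chi_m)$ divided by $\zeta(1+z_1+z_2)L(1+2z_1,\operatorname{sym}^2 f)L(1+z_1+z_2,\operatorname{sym}^2 f)L(1+2z_2,\operatorname{sym}^2 f)$) is the standard argument behind \cite[Lemma 2.5]{Li}, which the paper quotes without proof. Your matching of the second-order terms at primes $p\nmid 2qk_1$ is right.

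The gap is at the primes $p\mid k_1$, and it concerns exactly what the paper needs from the lemma: a bound for $Z_2$ that is uniform in $k_1$, since $Y$ sits inside the sum over $k_1$ in Section~\ref{sec5}. Your claim that the $b=0$ part of $F_p$ equals $1$ for $p\mid k_1$ is false. Since $p\,\|\,k_1$, your own rule permits $j=2$, and indeed $G_{k_1}(p^2)=-p$ (a Ramanujan sum). Hence
\[
F_p^{(b=0)}=1-\frac1p\Bigl(\lambda_f(p^2)p^{-2z_1}+\lambda_f(p)^2p^{-z_1-z_2}+\lambda_f(p^2)p^{-2z_2}\Bigr).
\]
This term matters. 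At $p\mid k_1$ the local factors of $L(1/2+z_i,f\otimes\chi_m)$ are trivial, but those of $\zeta(1+z_1+z_2)$ and the three symmetric-square $L$-functions are not. Using $\lambda_f(p)^2=1+\lambda_f(p^2)$, their product is $1+\frac1p(\cdots)+O(p^{-2+2\delta})$ with the same bracket. With your value $F_p^{(b=0)}=1$ you would be left with $Z_{2,p}=1+\frac1p(\cdots)+\dots$, a term of size up to $p^{-1+\delta}$. Then $\prod_{p\mid k_1}Z_{2,p}$ would not be bounded independently of $k_1$. Remarking that there are only finitely many such factors for each fixed $k_1$ does not address this. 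With the correct $j=2$ term the brackets cancel. For $b\ge1$, note that $j$ now runs up to $2b+2$, because $v_p(k_1p^{2b})=2b+1$ and $G(p^{2b+2})=-p^{2b+1}$. Including these, one gets $Z_{2,p}=1+O(p^{-1-\delta}+p^{-2+2\delta})$ for $p\mid k_1$, and this is what gives the uniform bound.

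There are two smaller points. First, at $p\mid q$ a bound $C^{\omega(q)}$ with $C>2$ is not $\ll\tau(q)$. Instead, observe that $Z_{2,p}=1+O(p^{-1/2+\delta/2})$ there, so all but boundedly many of these factors have modulus at most $2$, which gives $\ll 2^{\omega(q)}\le\tau(q)$. Second, the $b=0$ part of the generic factor contains the term $\chi_m(p)\lambda_f(p)p^{-3/2-3z_1}$, so it is only $1+O(p^{-3/2+3\delta/2})$. Absolute convergence therefore requires $\Re(z_i)>-1/6$, i.e.\ $\delta<1/3$, and your argument does not reach the full stated range $0<\delta<1/2$. This is a defect of the stated range rather than of your method. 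It is harmless for the paper, which only uses $\Re(z_i)=1/\log X$ and $\Re(z_3)=1+1/\log X$.
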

We next evaluate \eqref{equ:off-1} for the case  that $k_1$ are positive odd numbers, and the computation for the other cases is similar. Since   $G_k(n) = G_{4k}(n)$ for odd $n$, by the inclusion-exclusion, we deduce
\begin{align*}
    \sum_{k_2=1}^\infty\frac{(-1)^{k_2}}{k_2^{2u}}{G_{k_1k_2^2}(n_1 n_2)}
    =
    (2^{1-2u}-1) \sum_{k_2=1}^\infty\frac{1}{k_2^{2u}}{G_{k_1k_2^2}(n_1 n_2)}.
\end{align*}
This combined with \eqref{equ:off-1} and Lemma \ref{lem:off-euler} gives 
\begin{align*}
     S(&k\neq 0, k_1 \operatorname{odd})\nonumber\\
     =&X\sum_{\substack{(a,2)=1\\ a<Y}}\mu(a)
     \frac{1}{(2\pi i)^3} \int_{(1)}\int_{(1)}\int_{(1)}   X^{s_1+s_2+u}U^{-\beta+s_2+u}a^{-2+2u}  (2^{1-2u}-1)   \sideset{}{^*}\sum_{ k_1 \operatorname{odd}}\frac{1}{k_1^u}\nonumber\\
    &\times 
    L(1+s_1, f\otimes \chi_m) L(1+s_2, f \otimes \chi_m) Y(1/2+s_1,1/2+s_2,u;k_1,a) \mathcal{K}(s_1,s_2,u;k_1,\alpha,\beta)\,ds_1\,ds_2\,du.
\end{align*}
Introduce smoothed dyadic sums as in the proof of Lemma \ref{equ:reduce}, and move the lines of the integrals to $\Re(s_i)= -1/2+1/\log X$, $i=1,2$, and $\Re(u)=1+1/\log X$. Then
\begin{align}
     S(&k\neq 0, k_1 \operatorname{odd})\nonumber\\
     =&\sideset{}{^\#}\sum_{N_1} \sideset{}{^\#}\sum_{N_2} X\sum_{\substack{(a,2)=1\\ a<Y}}\mu(a)
     \frac{1}{(2\pi i)^5} \int_{(1+\frac{1}{\log X})}  \int_{(-\frac{1}{2}+\frac{1}{\log X})}\int_{(-\frac{1}{2}+\frac{1}{\log X})}
     \int_{(0)}\int_{(0)} (2^{1-2u}-1) \nonumber \\
     &\times 
     X^{s_1+s_2+u}U^{-\beta+s_2+u}a^{-2+2u}   N_1^{z_1-s_1-1/2} N_2^{z_2-s_2-1/2}\nonumber\\
    &\times 
     \sideset{}{^*}\sum_{ k_1 \operatorname{odd}}\frac{1}{k_1^u}
     \sum_{n_1=1}^\infty \frac{\lambda_f(n_1)\chi_m(n_1)}{n_1^{1/2+z_1}} V\left( \frac{n_1}{N_1} \right)\sum_{n_2=1}^\infty \frac{\lambda_f(n_2)\chi_m(n_2)}{n_2^{1/2+z_2}} V\left( \frac{n_2}{N_2} \right) \widetilde{G}(z_1-s_1-1/2)
   \nonumber \\
    &\times  \widetilde{G}(z_2-s_2-1/2) Y(1/2+s_1,1/2+s_2,u;k_1,a) \mathcal{K}(s_1,s_2,u;k_1,\alpha,\beta) \,dz_1\,dz_2 \,ds_1\,ds_2\,du.
    \label{equ:off-2}
\end{align} 
By Lemma \ref{lem-large-sieve}, 
\begin{align*}
   & \sideset{}{^*}\sum_{ k_1 \operatorname{odd}}\frac{1}{k_1^{1+1/\log X}}
     \left|\sum_{n_1=1}^\infty \frac{\lambda_f(n_1)\chi_m(n_1)}{n_1^{1/2+z_1}} V\left( \frac{n_1}{N_1} \right)\right|^2 \\
     &\ll \sideset{}{^\#}\sum_M \frac{1}{M^{1+1/\log X}} \sideset{}{^*}\sum_{ M\leq k_1 \leq 2M}
     \left|\sum_{n_1=1}^\infty \frac{\lambda_f(n_1)\chi_m(n_1)}{n_1^{1/2+z_1}} V\left( \frac{n_1}{N_1} \right)\right|^2 \\
     &\ll (\log X)(1+|\Im(z_1)|)^3\log (2+|\Im(z_1)|) .
\end{align*}
Substituting it in \eqref{equ:off-2} gives 
\begin{align*}
     S(k\neq 0, k_1 \operatorname{odd})
    & \ll X(\log X)^5 U^{1/2} \sideset{}{^\#}\sum_{N_1} \sideset{}{^\#}\sum_{N_2} N_1^{-1/\log X} N_2^{-1/\log X}\sum_{a<Y} \tau(a)
         \\
      &  \ll X(\log X)^{8} Y U^{1/2},
\end{align*}
which implies
\begin{lem}\label{lem-off}
We have
\begin{align*} 
     S(k\neq 0)
      &  \ll X(\log X)^{8} Y U^{1/2}.
\end{align*}
\end{lem}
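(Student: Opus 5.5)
This bound is essentially what the computation just before the statement gives. What remains is to handle the other sign and parity classes of $k_1$ and to check the exponents of $X$, $U$ and $a$ on the shifted contours. I would start from \eqref{equ:off-1} and split $k=k_1k_2^2$ by the sign of $k_1$ and by the residue of $k_1$ modulo $4$; write $m=k_1$ or $m=4k_1$ as in Lemma \ref{lem:off-euler}.

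For even $k_1$, $(-1)^{k_1k_2}=1$, so the sum over $k_2$ is $\zeta$-like with no twist. Lemma \ref{lem:off-euler} applies directly, with the factor $(2^{1-2u}-1)$ replaced by $1$. For negative $k_1$, only the factor $(\cos+\operatorname{sgn}(k_1)\sin)(\pi u/2)$ in $\mathcal{K}$ changes, and it is still bounded polynomially in $|\Im u|$. It is absorbed by the decay of $\Gamma(u)$, $g_\alpha$, $g_\beta$ and $\widetilde\Phi$. In each case I then obtain the analogue of \eqref{equ:off-2}: a product of $L(1+s_i,f\otimes\chi_m)$ with $Y(1/2+s_1,1/2+s_2,u;k_1,a)$ and $\mathcal{K}$.

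Next I shift contours to $\Re(s_i)=-1/2+1/\log X$ and $\Re(u)=1+1/\log X$. This is legitimate because $Z_2$ is holomorphic for $\Re(1/2+s_i)\ge -\delta/2$ and $\Re(u)\ge 1/2+\delta$, and because the $\zeta$ and $\operatorname{sym}^2$ factors in the denominator of $Y$ have no zeros on $\Re=1+2/\log X$ near the relevant region. The latter holds since their arguments have real part $1+O(1/\log X)$, and there the classical zero-free region gives a bound of $O(\log X)^{O(1)}$ for the reciprocals.

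The factor $1/((-\alpha+s_1+u)(-\beta+s_2+u))$ has real parts near $1/2$, so it is harmless. The exponents combine as $X^{s_1+s_2+u}=X^{O(1/\log X)}=O(1)$. The factor $U^{-\beta+s_2+u}$ has real part $\le 1/2+O(1/L)$, and since $U=(\log X)^{-A}$ this gives $U^{1/2}(\log X)^{o(1)}$. Finally, $a^{-2+2u}\ll a^{2/\log X}$, and together with $Z_2\ll\tau(a)$ this yields $\sum_{a<Y}\tau(a)\ll Y\log Y$.

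For the $L$-values I would apply the smooth dyadic decomposition and Mellin inversion used in the proof of Lemma \ref{equ:reduce}, then Cauchy--Schwarz over $k_1$ with dyadic $M$. The first inequality of Lemma \ref{lem-large-sieve} over fundamental discriminants ($m=k_1$ or $4k_1$) gives the factor $\log X(1+|\Im z|)^3\log(2+|\Im z|)$. The polynomial growth in $\Im z_i$ is absorbed by $\widetilde G$, and the factors $N_i^{-1/\log X}$ make the dyadic sums $\ll\log X$ each.

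Collecting the powers of $\log X$, namely two from the dyadic sums, one from the $k_1$ sum, the $\log X$-type bounds for the reciprocal $L$-functions, and the $Y\log Y$ from the sum over $a$, gives a bound of the form $X(\log X)^{8}YU^{1/2}$. The main obstacle is controlling $1/\zeta$ and $1/L(\cdot,\operatorname{sym}^2 f)$ uniformly on the shifted lines, with their imaginary parts tied to $s_i$ and the $u$-integration. I would use the standard bound $1/L(1+\sigma+it)\ll\log(2+|t|)$ for $\sigma\ge 0$. The remaining losses are polynomial in the imaginary parts and are killed by the rapid decay of $\Gamma(u)$, $g_\alpha$, $g_\beta$ and $\widetilde\Phi$.
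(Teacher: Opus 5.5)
Your proposal is correct and follows the paper's argument essentially step for step: Lemma~\ref{lem:off-euler} (with the factor $2^{1-2u}-1$ for odd $k_1$), the smoothed dyadic decomposition, the shift to $\Re(s_i)=-1/2+1/\log X$ and $\Re(u)=1+1/\log X$, Cauchy--Schwarz with Lemma~\ref{lem-large-sieve} over dyadic ranges of $k_1$, and the same bookkeeping of $X^{O(1/\log X)}$, $U^{1/2}$, $a^{2/\log X}$ and $\sum_{a<Y}\tau(a)$. One small correction: $(\cos\pm\sin)(\pi u/2)$ by itself grows like $e^{\pi|\Im(u)|/2}$, so only its product with $\Gamma(u)$ is polynomially bounded, and the convergence of the $s_1,s_2,u$ integrals actually comes from $g_\alpha$, $g_\beta$ and $\widetilde\Phi$, whose arguments involve three linearly independent combinations of $\Im(s_1),\Im(s_2),\Im(u)$ (also, the denominators of $Y$ have real part $1+2/\log X$ on the final contours, i.e.\ they lie in the region of absolute convergence, so no zero-free region is needed to justify the shift).
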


\bigskip

\section{Complete the proof} \label{sec6}

Combining 
\eqref{equ:S}, \eqref{def:S(a<Y)+S(a>Y)}, 
\eqref{def:S(k=0)+S(kne0)}, Lemma \ref{lem:asyk=0} and Lemma \ref{lem-off},
\begin{align*}
    &\sideset{}{^*}\sum_{(d,2)=1}\left(\frac{8d}{2\pi}\right)^{-1}I_{\alpha}I_{\beta,U}\PhidX\\
    =& M(\alpha,\beta) + S_2(\alpha,\beta;Y)+ S_3(\alpha,\beta;Y)+O(X(\log \log X)^3) \\
        &+ O(X {|\alpha+\beta|}^{-1} )+ O(X(\log X)^{8} Y U^{1/2}).
\end{align*}
By Lemma  \ref{lemagY} and  \eqref{equ:upperS3},  $S_2(\alpha,\beta;Y), S_3(\alpha,\beta;Y)$ are holomorphic for $|\Re(\alpha)|,|\Re(\beta)|\leq 1/L $, and in the same region, they are bounded by 
\[
S_2(\alpha,\beta;Y),  S_3(\alpha,\beta;Y) \ll  X^{1+3|\Re(\alpha)|+3|\Re(\beta)|}Y^{-1} (\log X)^{50}.
\]

Recall $I_{\alpha}I_{\beta,U}$ is one of the terms in \eqref{equ:split}, and the asymptotic formulas of the mean values of $I_{\alpha,U}I_\beta$ and $ I_{\alpha,U}I_{\beta,U}$ in \eqref{equ:split}  can be written down directly by comparison. The contribution from the term  $R_{\alpha}R_{\beta}$ is given in Lemma \ref{equ:reduce}. We have therefore obtained an asymptotic formula for the mean value of  $I_{\alpha}I_{\beta}$ in \eqref{equ:split}. Moreover, we see $I_{\alpha}I_{\beta}$ is one of the terms in \eqref{eq_1}, and the asymptotic formulas for the other terms in \eqref{eq_1} can be obtained by symmetry. Now we use the notation $S(\alpha,\beta;Y)$ to denote the sum of all the contributions from $a>Y$  that appear in the above process analogous to $S_2(\alpha,\beta;Y),  S_3(\alpha,\beta;Y)$ as shown in \eqref{def:S(a<Y)+S(a>Y)} and \eqref{def-s3}. By taking $U= Y^{-2}(\log X)^{-20}$, 
\begin{align}
&\sideset{}{^*}\sum_{(d,2)=1}\left(\frac{8d}{2\pi}\right)^{-1}\Lambda(1/2+\alpha,\fod)\Lambda(1/2+\beta,\fod)\PhidX\nonumber\\
=& M(\alpha,\beta) - M(\alpha,-\beta)-M(-\alpha,\beta)+M(-\alpha,-\beta)   
+ S(\alpha,\beta;Y) \nonumber\\
&+ O(X(\log \log X)^3) + O(X {|\alpha+\beta|}^{-1} ),
\label{equ:final}
\end{align}
where in the region $|\Re(\alpha)|,|\Re(\beta)|\leq 1/L $, $S(\alpha,\beta;Y)$ is holomorphic and 
\begin{align}
S(\alpha,\beta;Y)\ll  X^{1+3|\Re(\alpha)|+3|\Re(\beta)|}Y^{-1} (\log X)^{50}.
\label{equ:Sbd}
\end{align}

Now we prove Theorem \ref{main-thm}. By \eqref{eq:AFE of L'}, 
\begin{align*}
       &\sideset{}{^*}\sum_{(d,2)=1}   L'(1/2,\fod)^2    \PhidX\nonumber \\
       =&\Gamma\left(\frac{\kappa}{2}\right)^{-2}\frac{4}{(2\pi i)^2}\int_{(\frac{1}{4L})}\int_{(\frac{1}{2L})}
        \sideset{}{^*}\sum_{(d,2)=1}  \left(\frac{8d}{2\pi}\right)^{-1}
       \Lambda(1/2+s_1,\fod) \Lambda(1/2+s_2,\fod)
       \PhidX\nonumber\\
       &\times 
       e^{s_1^2+s_2^2}\frac{ds_1}{s_1^2}  \frac{ds_2}{s_2^2}.
    \end{align*}
Inserting \eqref{equ:final} into the above implies 
\begin{align}
       &\sideset{}{^*}\sum_{(d,2)=1}   L'(1/2,\fod)^2    \PhidX\nonumber \\
       =&\Gamma\left(\frac{\kappa}{2}\right)^{-2}\frac{4}{(2\pi i)^2}\int_{(\frac{1}{4L})}\int_{(\frac{1}{2L})}
        [M(s_1,s_2) - M(s_1,-s_2)-M(-s_1,s_2)+M(-s_1,-s_2)\nonumber\\
        &+S(s_1,s_2;Y)] e^{s_1^2+s_2^2}\frac{ds_1}{s_1^2}  \frac{ds_2}{s_2^2}+ O(X (\log \log X)^5),
        \label{equ:seclast}
\end{align}
where  we have used the bound
\[
\int_{(\frac{1}{L})} \frac{1}{(1+|s|^{10})|s|^2}  \, |ds|  \ll \log \log X.
\]
We first evaluate the contribution from $S(s_1,s_2;Y)$. Move the line of the integral over $s_2 $ to $\Re(s_2)=1/(4\log X)$, and then to $\Re(s_1)=1/(2\log X)$ for the integral over $s_1$.  This process encounters no  poles since $S(s_1,s_2;Y)$ is holomorphic in the region $|\Re(\alpha)|,|\Re(\beta)|\leq 1/L $. By \eqref{equ:Sbd},
\begin{align*}
       &\frac{1}{(2\pi i)^2}\int_{(\frac{1}{4\log X})}\int_{(\frac{1}{2\log X})}
       S(s_1,s_2;Y)  e^{s_1^2+s_2^2}\frac{ds_1}{s_1^2}  \frac{ds_2}{s_2^2}
       \ll  X Y^{-1} (\log X)^{52} \ll X(\log X)^{-1},
\end{align*}
where we have taken $Y=(\log X)^{53} $. In addition, we see 
\begin{align*}
    \frac{1}{(2\pi i)^2}\int_{(\frac{1}{4L})}\int_{(\frac{1}{2L})}
        (M(s_1,s_2) - M(s_1,-s_2))  e^{s_1^2+s_2^2}
        \frac{ds_1}{s_1^2}  \frac{ds_2}{s_2^2}
   & =
   \frac{1}{2\pi i}\int_{(\frac{1}{2L})}
        M^{(0,1)}(s_1,0)  e^{s_1^2}
        \frac{ds_1}{s_1^2},\\
    \frac{1}{(2\pi i)^2}\int_{(\frac{1}{4L})}\int_{(\frac{1}{2L})}
        (-M(-s_1,s_2) + M(-s_1,-s_2))  e^{s_1^2+s_2^2}
        \frac{ds_1}{s_1^2}  \frac{ds_2}{s_2^2}
   & =
   -\frac{1}{2\pi i}\int_{(\frac{1}{2L})}
        M^{(0,1)}(-s_1,0)  e^{s_1^2}
        \frac{ds_1}{s_1^2} .
\end{align*}
Also,  
\begin{align*}
  \frac{1}{2\pi i}\int_{(\frac{1}{2L})}
        M^{(0,1)}(s_1,0)
       e^{s_1^2} \frac{ds_1}{s_1^2}
    -\frac{1}{2\pi i}\int_{(\frac{1}{2L})}
        M^{(0,1)}(-s_1,0) e^{s_1^2}
        \frac{ds_1}{s_1^2} 
= \underset{s_1=0} {\operatorname{Res}}\left( M^{(0,1)}(s_1,0) e^{s_1^2}\frac{1}{s_1^2}\right).
\end{align*}
Here $M^{(k,\ell)}(s_1,s_2) := \frac{\partial^{k+\ell} }{\partial s_1^k \partial s_2^\ell} M(s_1,s_2)$. Recall the definition of $M(\alpha,\beta)$ in \eqref{equ:dia-5} and write 
\[
M(\alpha,\beta) =:X^{1+\alpha+\beta} \zeta(1+\alpha+\beta) T(\alpha,\beta).
\]
Then 
\begin{align*}
M^{(0,1)}(s_1,0) =& X^{1+s_1}(\log X) \zeta(1+s_1) T(s_1,0) + X^{1+s_1} \zeta'(1+s_1) T(s_1,0)\\
&+ X^{1+s_1} \zeta(1+s_1) T^{(0,1)}(s_1,0).
\end{align*}
We write 
\begin{align*}
X^{s_1}& = 1+ (\log X) s_1 + \frac{1}{2!}(\log X)^2 s_1^2 + \frac{1}{3!}(\log X)^3 s_1^3+ \cdots,  \\
\zeta(1+s_1) &=  \frac{1}{s_1} + \gamma_0 + \gamma_1 s_1 + \cdots, \\
T(s_1,0) & = a_0+ a_1 s_1 + a_2 s_1^2\cdots,\\
e^{s_1^2} & = 1+  s_1^2 + \cdots .
\end{align*}
Therefore,
\begin{align*}
    \underset{s_1=0} {\operatorname{Res}} \left( M^{(0,1)}(s_1,0) e^{s_1^2}\frac{1}{s_1^2}\right) = C_3 X (\log X)^3 + C_2 X (\log X)^2 +C_1 X \log X + C_0 X,
\end{align*}
where  $C_i$ can be computed precisely, in particular,
\begin{align*}
    C_3& = \frac{1}{6\pi^2}
     L(1,\operatorname{sym}^2 f ) ^3
     Z_1(0,0)\Gamma(\tfrac{\kappa}{2})^2
    \widetilde{\Phi}(1). 
\end{align*}
By \eqref{equ:seclast} and the discussion following it, 
\begin{align*}
       \sideset{}{^*}\sum_{(d,2)=1}   L'(1/2,\fod)^2   \PhidX
       = c_3 X (\log X)^3 + c_2 X (\log X)^2 +c_1 X \log X +  O(X (\log \log X)^5),
\end{align*}
where $c_i = 4\Gamma(\frac{\kappa}{2})^{-2}C_i$, $i=1,2,3$. This completes the proof of  Theorem \ref{main-thm}.


\bibliographystyle{plain}
 \bibliography{ams-reference}

@article {Zhou,
    AUTHOR = {Zhou, Z.},
     TITLE = {Moment of derivatives of quadratic twists of modular {$L$}-Functions},
   JOURNAL = {{\rm To appear in}  Algebra  Number Theory},
  FJOURNAL = {},
      YEAR = {arXiv:2503.14680},
     PAGES = {},
      ISSN = {},
   MRCLASS = {},
  MRNUMBER = {},
MRREVIEWER = {},
       DOI = {},
       URL = {https://doi.org/10.19086/da},
}

@article {NSW02,
    AUTHOR = {Ng, N. and Shen, Q. and Wong, P.-J. },
     TITLE = {{The eighth moment of the Riemann zeta function}},
   JOURNAL = {J. Eur. Math. Soc. (JEMS)},
  FJOURNAL = {},
      YEAR = {pubslished online, 2025},
     PAGES = {},
      ISSN = {},
   MRCLASS = {},
  MRNUMBER = {},
MRREVIEWER = {},
       DOI = {},
       URL = {https://doi.org/10.19086/da},
}

@book {I-K,
    AUTHOR = {Iwaniec, H. and Kowalski, E.},
     TITLE = {Analytic number theory},
    SERIES = {American Mathematical Society Colloquium Publications},
    VOLUME = {53},
 PUBLISHER = {American Mathematical Society, Providence, RI},
      YEAR = {2004},
     PAGES = {xii+615},
      ISBN = {0-8218-3633-1},
   MRCLASS = {11-02 (11Fxx 11Lxx 11Mxx 11Nxx)},
  MRNUMBER = {2061214},
MRREVIEWER = {K.\ Soundararajan},
       DOI = {10.1090/coll/053},
       URL = {https://doi.org/10.1090/coll/053},
}

@article {Li,
    AUTHOR = {Li, X.},
     TITLE = {Moments of quadratic twists of modular {$L$}-functions},
   JOURNAL = {Invent. Math.},
  FJOURNAL = {Inventiones Mathematicae},
    VOLUME = {237},
      YEAR = {2024},
    NUMBER = {2},
     PAGES = {697--733},
      ISSN = {0020-9910,1432-1297},
   MRCLASS = {11F67 (11F11)},
  MRNUMBER = {4768632},
       DOI = {10.1007/s00222-024-01265-1},
       URL = {https://doi.org/10.1007/s00222-024-01265-1},
}

@article {Kumar,
    AUTHOR = {Kumar, S. and Mallesham, K. and Sharma, P. and
              Singh, S. K.},
     TITLE = {Moments of derivatives of modular {$L$}-functions},
   JOURNAL = {Q. J. Math.},
  FJOURNAL = {The Quarterly Journal of Mathematics},
    VOLUME = {75},
      YEAR = {2024},
    NUMBER = {2},
     PAGES = {715--734},
      ISSN = {0033-5606,1464-3847},
   MRCLASS = {11F66 (11N37)},
  MRNUMBER = {4765788},
       DOI = {10.1093/qmath/haae028},
       URL = {https://doi.org/10.1093/qmath/haae028},
}

@article {Bump-F-H,
    AUTHOR = {Bump, D. and Friedberg, S. and Hoffstein, J.},
     TITLE = {Nonvanishing theorems for {$L$}-functions of modular forms and
              their derivatives},
   JOURNAL = {Invent. Math.},
  FJOURNAL = {Inventiones Mathematicae},
    VOLUME = {102},
      YEAR = {1990},
    NUMBER = {3},
     PAGES = {543--618},
      ISSN = {0020-9910},
   MRCLASS = {11F66 (11F67 11G05 11G40)},
  MRNUMBER = {1074487},
MRREVIEWER = {Alexey A. Panchishkin},
       DOI = {10.1007/BF01233440},
       URL = {https://doi.org/10.1007/BF01233440},
}

@article {Chandee,
    AUTHOR = {Chandee, V.},
     TITLE = {On the correlation of shifted values of the {R}iemann zeta
              function},
   JOURNAL = {Q. J. Math.},
  FJOURNAL = {The Quarterly Journal of Mathematics},
    VOLUME = {62},
      YEAR = {2011},
    NUMBER = {3},
     PAGES = {545--572},
      ISSN = {0033-5606},
   MRCLASS = {11M06},
  MRNUMBER = {2825471},
MRREVIEWER = {Matthew P. Young},
       DOI = {10.1093/qmath/haq008},
       URL = {https://doi.org/10.1093/qmath/haq008},
}

@article {Conrey-Farmer-Keating-Rubinstein-Snaith,
    AUTHOR = {Conrey, J. B. and Farmer, D. W. and Keating, J. P. and
              Rubinstein, M. O. and Snaith, N. C.},
     TITLE = {Integral moments of {$L$}-functions},
   JOURNAL = {Proc. London Math. Soc. (3)},
  FJOURNAL = {Proceedings of the London Mathematical Society. Third Series},
    VOLUME = {91},
      YEAR = {2005},
    NUMBER = {1},
     PAGES = {33--104},
      ISSN = {0024-6115},
   MRCLASS = {11M26},
  MRNUMBER = {2149530},
MRREVIEWER = {K. Soundararajan},
       DOI = {10.1112/S0024611504015175},
       URL = {https://doi.org/10.1112/S0024611504015175},
}

@article {Kolyvagin,
    AUTHOR = {Kolyvagin, V. A.},
     TITLE = {Finiteness of {$E({\bf Q})$} and {III}{{$(E,{\bf Q})$}} for a
              subclass of {W}eil curves},
   JOURNAL = {Izv. Akad. Nauk SSSR Ser. Mat.},
  FJOURNAL = {Izvestiya Akademii Nauk SSSR. Seriya Matematicheskaya},
    VOLUME = {52},
      YEAR = {1988},
    NUMBER = {3},
     PAGES = {522--540, 670--671},
      ISSN = {0373-2436},
   MRCLASS = {11G05 (11G40 14G25 14K07)},
  MRNUMBER = {954295},
MRREVIEWER = {Reinhard B\"{o}lling},
       DOI = {10.1070/IM1989v032n03ABEH000779},
       URL = {https://doi.org/10.1070/IM1989v032n03ABEH000779},
}

@article {Murty-Murty,
    AUTHOR = {Murty, M. R. and Murty, V. K.},
     TITLE = {Mean values of derivatives of modular {$L$}-series},
   JOURNAL = {Ann. of Math. (2)},
  FJOURNAL = {Annals of Mathematics. Second Series},
    VOLUME = {133},
      YEAR = {1991},
    NUMBER = {3},
     PAGES = {447--475},
      ISSN = {0003-486X},
   MRCLASS = {11F67 (11G05 11G40)},
  MRNUMBER = {1109350},
MRREVIEWER = {Daniel Bump},
       DOI = {10.2307/2944316},
       URL = {https://doi.org/10.2307/2944316},
}

@article {Petrow,
    AUTHOR = {Petrow, I.},
     TITLE = {Moments of {$L'(\frac12)$} in the family of quadratic twists},
   JOURNAL = {Int. Math. Res. Not. IMRN},
  FJOURNAL = {International Mathematics Research Notices. IMRN},
      YEAR = {2014},
    NUMBER = {6},
     PAGES = {1576--1612},
      ISSN = {1073-7928},
   MRCLASS = {11N37 (11A25)},
  MRNUMBER = {3180602},
MRREVIEWER = {Olivier Bordell\`es},
       DOI = {10.1093/imrn/rns265},
       URL = {https://doi.org/10.1093/imrn/rns265},
}

@article {Sound-Young,
    AUTHOR = {Soundararajan, K. and Young, M. P.},
     TITLE = {The second moment of quadratic twists of modular
              {$L$}-functions},
   JOURNAL = {J. Eur. Math. Soc. (JEMS)},
  FJOURNAL = {Journal of the European Mathematical Society (JEMS)},
    VOLUME = {12},
      YEAR = {2010},
    NUMBER = {5},
     PAGES = {1097--1116},
      ISSN = {1435-9855},
   MRCLASS = {11F66 (11F67)},
  MRNUMBER = {2677611},
MRREVIEWER = {D. R. Heath-Brown},
       DOI = {10.4171/JEMS/224},
       URL = {https://doi.org/10.4171/JEMS/224},
}

@article {Young,
    AUTHOR = {Young, M. P.},
     TITLE = {The third moment of quadratic {D}irichlet {$L$}-functions},
   JOURNAL = {Selecta Math. (N.S.)},
  FJOURNAL = {Selecta Mathematica. New Series},
    VOLUME = {19},
      YEAR = {2013},
    NUMBER = {2},
     PAGES = {509--543},
      ISSN = {1022-1824},
   MRCLASS = {11M06 (11A25 11N37)},
  MRNUMBER = {3090236},
MRREVIEWER = {Olivier Bordell\`es},
       DOI = {10.1007/s00029-012-0104-4},
       URL = {https://doi.org/10.1007/s00029-012-0104-4},
}



%


\end{document}